\theoremstyle{thmstyleone}%
\newtheorem{theorem}{Theorem}
\newtheorem{proposition}{Proposition}%
\theoremstyle{thmstyletwo}%
\newtheorem{example}{Example}%
\newtheorem{remark}{Remark}%
\newtheorem{assumption}{Assumption}%
\newtheorem{lemma}{Lemma}%
\theoremstyle{thmstylethree}%
\begin{document}

\title[Article Title]{A High-order Backpropagation Algorithm for Neural Stochastic Differential Equation Model}


\author{\fnm{Daili} \sur{Sheng}}\email{25b312008@stu.hit.edu.cn}

\author*{\fnm{Minghui} \sur{Song}\textsuperscript{*}}\email{songmh@hit.edu.cn}

\author{\fnm{Xiang} \sur{Peng}}\email{1201200313@stu.hit.edu.cn}
\author{\fnm{Xuanqi} \sur{Dong}}\email{23s012013@stu.hit.edu.cn}

\affil{\orgdiv{School of Mathematics}, \orgname{Harbin Institute of Technology}, \orgaddress{\city{Harbin}, \postcode{150001}, \country{China}}}

%


\abstract{Neural stochastic differential equation model with a Brownian motion term can capture epistemic uncertainty of deep neural network from the perspective of a dynamical system. {The goal of this paper is to improve the convergence rate of the sample-wise backpropagation algorithm  in neural stochastic differential equation model which has been proposed in [Archibald et al., SIAM Journal on Numerical Analysis, 62 (2024), pp. 593-621]. It is necessary to emphasize that, improving the convergence order of the algorithm consisting of forward backward stochastic differential equations remains challenging, due to the loss of information of Z term in backward equations under sample-wise approximation and the limitations of the forward network form.} In this paper, we develop a high-order backpropagation algorithm to improve the training accuracy. Under the convexity assumption, the result indicates that the first-order convergence is achieved when the number of training steps is proportional to the cubic number of layers. Finally, numerical examples illustrate our theoretical results.
}

\keywords{Neural stochastic differential equation model; Stochastic gradient descent; Convergence analysis.}

\maketitle
\section{Introduction}\label{sec1}
Neural stochastic differential equation (Neural SDE) model, also known as stochastic neural network (SNN), models the evolution of hidden states through a stochastic differential equation (SDE) \cite{jia2019neural, kidger2021efficient, kong2020sde, liu2020does}. This enables explicit uncertainty quantification, which is critical for decision making to avoid dangerous accidents in safety-critical areas, ranging from automatic medical diagnosis to autonomous vehicles to cyber security  and beyond \cite{guo2017calibration}. Some empirical results indicate Neural SDE has better uncertainty estimation  in some fields than classical methods, such as Bayesian Neural Networks (BNNs) \cite{kwon2020uncertainty, eaton2018towards}, hypothetical density filtering methods \cite{lakshminarayanan2017simple}, Monte Carlo methods \cite{gal2016dropout}, etc. Another study \cite{liu2020does} demonstrates that Neural SDE can have better robustness and generalization than Neural ordinary differential equation (Neural ODE) model \cite{chen2018neural}.

While the construction and justification of Neural SDE are well accepted, the training process is also challenging. Due to the stochastic integrals, the standard chain rule is not applicable for backpropagation like deterministic deep neural networks (DNNs), and It\^{o} calculus is needed, which makes the computation of the gradient complicated \cite{chen2018neural, li2020scalable}. To derive a mathematical expression for the gradient, recent work formulates Neural SDE training as a stochastic optimal control problem (SOCP), and applies the stochastic maximum principle (SMP) to solve it \cite{archibald2020stochastic, MR4470545, archibald2024numerical}, which leads to a stochastic Hamiltonian system that consists of forward backward stochastic differential equations (FBSDEs) \cite{ma1999forward, zhang2017backward}. Thus, solving SOCPs requires solving FBSDEs that satisfy specific optimization condition typically achieved by gradient descent. Under appropriate assumptions, it can be shown that the gradient process of the optimization condition can also be represented by a FBSDE system. Therefore, solving FBSDEs has to be implemented repeatedly to reach the optimization condition.

Several numerical schemes for solving FBSDEs have been developed, among  which some are Euler-type methods with convergence rate $1/2$, such as \cite{cvitanic2005steepest, delarue2006forward, douglas1996numerical} and some  are high-order numerical methods, such as \cite{ma2008numerical, zhao2014new, zhao2014second, zhao2014numerical}. However, approximating solutions of FBSDEs in the high-dimensional (controlled) state space at each iteration step remains a challenge. To approximate the gradient with classical numerical schemes, the conditional expectation has to be evaluated which is a very challenging task because of high-dimensional integrations. To address this challenge, we utilize the sample-wise approximation to drop the conditional expectation \cite{archibald2020stochastic, archibald2020efficient, archibald2023stochastic}. That is to say, we only select single sample-path in the state space and solve the FBSDEs along the chosen sample-path at each stochastic gradient descent (SGD) iteration step. In this way, we avoid solving the FBSDEs repeatedly in the entire high-dimensional state space, which makes the SGD optimization an efficient method to apply the SMP approach for Neural SDE. 

In \cite{archibald2024numerical}, following a standard algorithm flow as above, the convergence and an error estimate for the sample-wise backpropagation algorithm was proved. However, only half-order convergence was derived under the convexity assumption. {Due to the loss of information of Z term in backward equations under sample-wise approximation and the limitations of the forward network form, improving the convergence order of the algorithm remains challenging.} In this paper, we implement an efficient numerical scheme proposed in \cite{zhao2014second} as a basis for our high-order sample-wise backpropagation algorithm, and the network parameters can achieve first-order convergence. Under the convexity assumption, we prove that the error estimate contains two terms: the first is a quotient between the depth of neural networks and the number of iterations; the second is a first-order term with respect to the depth of neural networks, which is a half-order term in previous work \cite{archibald2024numerical}. While the first term reveals an inherent relation between the depth of a neural network and the number of training steps, the second term indicates the error of discretizing the continuous differential equations of probabilistic learning. In particular, by choosing the number of iterations through the depth of Neural SDE, the control variable can achieve first-order convergence.

The rest of this paper is organized as follows: In Section 2, we recall some known procedure of sample-wise backpropagation algorithm for Neural SDE and introduce our high-order scheme. The main convergence results are then stated and proved in Section 3. In Section 4, we validate our analysis results through several numerical experiments.

\section{\justifying A high-order sample-wise backpropagation method for Neural SDE}
 For the convenience of the readers, in this section, we detailedly introduce the high-order sample-wise backpropagation method for training Neural SDE based on the method proposed in \cite{MR4470545}. The core of our idea is to view the transformations as state evolution of a stochastic dynamical system. And then the  training procedure is a stochastic optimal control problem which can be solved by generalized stochastic gradient descent algorithm.

\subsection{Neural SDE and stochastic optimal control}
In a multilayer stochastic neural
network, the mathematical expression of sequential propagation between adjacent hidden layers can be described as
follows:
\begin{equation}\label{1001}
	X_{n + 1} = X_n+h b(X_n,u_n)+\sigma(u_n)\omega_n, \quad n = 0,1,2,\ldots,N - 1,
\end{equation}
in which $b$ and $\sigma$ act as the drift net and diffusion net for prediction and uncertainty quantification, respectively. Moreover, $X_n \in \mathbb{R}^p$ is the hidden state at layer $n$ containing $p$ neurons, $u_n$ denotes the parameters of the Neural SDE, $h$ is the step-size, and $\omega_n$ is a $q$-dimensional Gaussian random variable that accounts for uncertainty in the neural network. As neural nets map an input $X_0$ to an output $X_T$ through a sequence of hidden layers, the transformations can thus be viewed as the discretization of a dynamical system when $h \rightarrow 0$:
\begin{equation}\label{1002}
X_T = X_0 + \int_0^T b(X_t, u_t) \mathrm{d} t + \int_0^T \sigma(u_t) \mathrm{d} W_t, 
\end{equation}
where $\{ W_t^i\}_{ 0\leq t\leq T}, i=1,2,\dots,q$ is the standard Brownian motion corresponding to the i.i.d.  Gaussian random variable sequence $\{ w_n\}_n$ in \eqref{1001}. We propose the following objective function for training our Neural SDE:
\begin{equation}\label{1003}
	J(u) = \mathbb{E} \left[ \int_{0}^{T} r(X_t, u_t) \mathrm{d} t + \Phi (X_T, \Gamma) \right],
\end{equation}
where $\Gamma$ is the random variable that generates training data in machine learning,  which also depends on $X_0$, and $\Phi (X_T, \Gamma) :={\| X_T - \Gamma \|}_{loss} $ is the loss function corresponding to a loss error norm $\|  \cdot  \|_{loss}$ \cite{haber2017stable}, the integral $\int_{0}^{T} r(X_t, u_t) \mathrm{d} t$ represents the running cost. The goal of deep learning is to solve the SOCP, i.e., 
\begin{equation}\label{1004}
	\text{Find } u^{*} \in \mathcal{K}[0, T] \text{ such that } J(u^*) = \inf_{u \in \mathcal{K}[0, T]} J(u).
\end{equation}
The admissible control set is given by
$$\mathcal{K} [0,T] := \{ u \in L^2([0, T]; \mathbb{R}^m) | \, u(t) \in \mathcal{C} \text{ a.e.} \},$$
here $L^2([0, T]; \mathbb{R}^m)$ denotes the space consisting of all functions $u : [0, T] \rightarrow \mathbb{R}^m$ that satisfy $\|u\|_2^2 := \int_0^T |u(t)|^2 \, \mathrm{d}t < +\infty$ and $\mathcal{C} \subset \mathbb{R}^m$ is a nonempty, convex and closed subset.

\subsection{Stochastic gradient decent}
For the sake of notational simplicity, our discussion will be confined to the one-dimensional case, i.e., $p=m=q=1$, however, the entire framework can be trivially extended to the multi-dimensional case.

We begin with the following notation:
\begin{itemize}
	\item $C_b^{j,j,j}$: the set of continuously differentiable functions $(x, y, z) \in \mathbb{R} \times \mathbb{R} \times \mathbb{R} \times [0, T] \mapsto g(x, y, z) \in \mathbb{R}$ with bounded partial derivative functions $g_{x, y, z}^{j_1, j_2, j_3}$ for $0 \leq j_1, j_2, j_3 \leq j$. Analogous definitions apply for $C_b^j$, $C_b^{j,j}$.	
	\item $C_b^{j+\alpha}$: the set consisting of all $g \in C_b^j$ with $g^j$ being H{\"o}lder continuous with index $\alpha \in (0, 1)$.
	\item $\langle u, v\rangle$: the inner product of $u, v \in L^2([0, T]; \mathbb{R})$, i.e., $\langle u, v\rangle = \int_0^T u(t)  \cdot v(t) \, \mathrm{d} t$.
	\item $C$: the generic constant independent of $k, N$, and control parameter $u$.
\end{itemize}

For the functions in SOCP \eqref{1004}, the following assumptions are given throughout the paper:
\begin{assumption}\label{ass}
	\item[(a)] Both $b$ and $\sigma$ are deterministic, and $b \in C_b^{2,2}(\mathbb{R} \times \mathbb{R}; \mathbb{R})$ and $\sigma \in C_b^2(\mathbb{R}; \mathbb{R})$.
	\item[(b)] $b, b_x, b_u, \sigma, r_x, r_u$ are all uniformly Lipschitz in $x, u$ and uniformly bounded.
	\item[(c)] $\sigma$ satisfies the uniform elliptic condition.
	\item[(d)] The initial condition $X_0 \in L^2(\mathcal{F}_0)$.
	\item[(e)] The terminal (loss) function $\Phi \in C_b^{3+\alpha}$ for some $\alpha \in (0, 1)$.
	\item[(f)] $\lim_{\|u\|_2 \to \infty} J(u) = \infty$.
\end{assumption}
Notice that under Assumption \ref{ass}, the solution $X_t$ of \eqref{1002} and the cost functional $J(u)$ are all well defined for $u \in \mathcal{K}[0, T]$ (see \cite{yong2012stochastic}).

To utilize the stochastic gradient decent, we need to derive $\nabla J_u (u_t)$ first, which can be represented by introducing a BSDE. From the definition in \eqref{1003}, for any \( v \in L^2([0, T]; \mathbb{R}^m) \), we have
\begin{equation}\label{1005}
	\begin{aligned}
		&\nabla J_u (u_t)(v) = \lim_{\kappa \to 0} \frac{J(u + \kappa v) - J(u)}{\kappa}\\
		=& \mathbb{E} \left[ \int_0^T \Big(r_x (X_t, u_t) D X_t(v) + r_u (X_t, u_t) v(t) \Big) \mathrm{d}t + \Phi_x (X_T, \Gamma) D X_T(v) \right],
	\end{aligned}
\end{equation}
where $ t \mapsto D X_t(v)$ is the variational process given by the following SDE:
\begin{equation}\label{10061}
	\begin{aligned}
		\mathrm{d} D X_t(v) &= \left( b_x(X_t(v), u_t) D X_t(v) + b_u(X_t(v), u_t) v(t) \right) \mathrm{d} t \\
		&\quad + \left( \sigma_x(X_t(v), u_t) D X_t(v) + \sigma_u(u_t) v(t) \right) \mathrm{d} W_t, \quad D X_0(v) = 0,
	\end{aligned}
\end{equation}
and $b_x, \sigma_x$ and $r_x$ are partial derivatives with respect to the state $X$, $b_u, \sigma_u$ and $r_u$ are partial derivatives with respect to the control $u$. To get rid of $DX_t(v)$ in \eqref{1005}, 
we have the following BSDE:
\begin{equation}\label{1006}
	-\mathrm{d} Y_t = \left( b_x(X_t, u_t) Y_t + r_x(X_t, u_t) \right) \mathrm{d}t - Z_{t} \mathrm{d} W_t, \quad Y_T = \Phi_x(X_T, \varGamma),
\end{equation}
in which $Y_t$ is the adjoint process of the state $X_t$, and $Z_t$ is the martingale representation of $Y_t$ with respect to $W_t$.
Then under Assumption 1, the BSDE \eqref{1006} admits an unique solution $(Y_t, Z_t)$ for $u \in \mathcal{K}$, and the following boundness property is guaranteed (see Theorem 4.2.1 in \cite{zhang2017backward}):
\begin{equation}
	\sup_{0 \leq t \leq T} \mathbb{E}[|Y_t|^2] + \mathbb{E} \left[ \int_0^T |Z_t|^2 \, \mathrm{d} t \right] \leq C.
\end{equation}
We shall show in the following that by introducing the pair $(Y_t, Z_t)$, the involving  terms $D X_t (v)$ in \eqref{1005} will be canceled. More precisely, by It\^{o} formula, we have
\begin{equation}\label{10062}
	\begin{aligned}
		& r_x (X_t, u_t) D X_t(v) \mathrm{d}t \\
		=& -D X_t(v) \mathrm{d} Y_t -  Y_t b_x(X_t, u_t)   D X_t(v) \mathrm{d} t + Z_t D X_t(v) \mathrm{d} W_t \\
		=& -\mathrm{d}(Y_t D X_t(v)) + Y_t \mathrm{d} D X_t(v) + Z_t  \sigma_u(u_t) v(t)  \mathrm{d} t \\
		& - \left( Y_t b_x(X_t, u_t)  \right) D X_t(v) \mathrm{d} t + Z_t D X_t(v) \mathrm{d} W_t \\
		=& -\mathrm{d} (Y_t D X_t(v)) + \left( Y_t b_u(X_t, u_t) + Z_t \sigma_u(u_t) \right) v(t) \mathrm{d} t \\
		& + \left( Y_t \sigma_u(u_t) v(t) + Z_t D X_t(v) \right) \mathrm{d} W_t.
	\end{aligned}
\end{equation}
Then, by inserting \eqref{10062} into \eqref{1005}, we can re-define $\nabla J_u$ by
\begin{equation}
	\nabla J_u (u_t) = \mathbb{E} \left[ b_u(X_t, u_t) Y_t + \sigma_u(u_t) Z_t + r_u(X_t, u_t) \right].
\end{equation}

We close this section by the following lemmas of projection operator and stochastic gradient decent method.
\begin{lemma}\label{lemma01}
	Let $\mathcal{P}_{\mathcal{K}}$ be the projection operator from $L^2([0, T]; \mathbb{R}^m)$ onto a convex set $\mathcal{K}$ such that
	\begin{equation}\label{le01}
		\|v - \mathcal{P}_{\mathcal{K}} v\| = \min_{z(t) \in \mathcal{K}} \|v - z\|.
	\end{equation}
	Then $\mathcal{P}_{\mathcal{K}}v$ satisfies \eqref{le01} if and only if, for any $z(t) \in \mathcal{K}$
	\begin{equation}
		(\mathcal{P}_{\mathcal{K}}v - v, z - \mathcal{P}_{\mathcal{K}}v) \geq 0.
	\end{equation}
\end{lemma}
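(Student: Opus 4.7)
The plan is to recognize this as the standard variational-inequality characterization of the metric projection onto a closed convex set in a Hilbert space, and to prove the two directions separately using the Hilbert space structure of $L^2([0,T];\mathbb{R}^m)$ together with convexity of $\mathcal{K}$.

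For the forward direction ($\Rightarrow$), I would start from $\mathcal{P}_{\mathcal{K}} v$ attaining the minimum and perturb along line segments into $\mathcal{K}$. Given an arbitrary $z \in \mathcal{K}$, convexity implies that $w_\lambda := \mathcal{P}_{\mathcal{K}} v + \lambda (z - \mathcal{P}_{\mathcal{K}} v) \in \mathcal{K}$ for every $\lambda \in [0,1]$. Define
\begin{equation*}
\phi(\lambda) := \|v - w_\lambda\|^2 = \|v - \mathcal{P}_{\mathcal{K}} v\|^2 - 2\lambda (v - \mathcal{P}_{\mathcal{K}} v, z - \mathcal{P}_{\mathcal{K}} v) + \lambda^2 \|z - \mathcal{P}_{\mathcal{K}} v\|^2.
\end{equation*}
By the minimality of $\mathcal{P}_{\mathcal{K}} v$, the scalar function $\phi$ attains its minimum on $[0,1]$ at $\lambda = 0$, so $\phi'(0) \geq 0$. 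Computing $\phi'(0) = -2(v - \mathcal{P}_{\mathcal{K}} v, z - \mathcal{P}_{\mathcal{K}} v)$ and rearranging yields $(\mathcal{P}_{\mathcal{K}} v - v, z - \mathcal{P}_{\mathcal{K}} v) \geq 0$.

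For the converse ($\Leftarrow$), I would use the polarization identity. For any $z \in \mathcal{K}$, write $v - z = (v - \mathcal{P}_{\mathcal{K}} v) + (\mathcal{P}_{\mathcal{K}} v - z)$ and expand:
\begin{equation*}
\|v - z\|^2 = \|v - \mathcal{P}_{\mathcal{K}} v\|^2 + 2 (v - \mathcal{P}_{\mathcal{K}} v, \mathcal{P}_{\mathcal{K}} v - z) + \|\mathcal{P}_{\mathcal{K}} v - z\|^2.
\end{equation*}
The cross term equals $2(\mathcal{P}_{\mathcal{K}} v - v, z - \mathcal{P}_{\mathcal{K}} v) \geq 0$ by the assumed variational inequality, and the last term is non-negative, so $\|v - z\|^2 \geq \|v - \mathcal{P}_{\mathcal{K}} v\|^2$, which is exactly \eqref{le01}.

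There is no real obstacle here: this is a textbook Hilbert-space fact, and the only care required is to exploit the convexity of $\mathcal{K}$ to ensure $w_\lambda \in \mathcal{K}$ for the admissible variation, and to keep track of signs when identifying $(\mathcal{P}_{\mathcal{K}} v - v, \mathcal{P}_{\mathcal{K}} v - z) = -(\mathcal{P}_{\mathcal{K}} v - v, z - \mathcal{P}_{\mathcal{K}} v)$ in the expansion. Closedness of $\mathcal{K}$ is used implicitly to guarantee existence of the projection in the definition of $\mathcal{P}_{\mathcal{K}}$, but it does not enter the equivalence proof itself.
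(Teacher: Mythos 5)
Your proof is correct: both directions are the standard variational characterization of the metric projection in a Hilbert space, with the forward direction obtained from the first-order optimality condition $\phi'(0)\ge 0$ along the admissible segment $w_\lambda\in\mathcal{K}$ and the converse from expanding $\|v-z\|^2$ and discarding the two nonnegative terms. The paper states this lemma without proof (treating it as a known fact), so there is nothing to compare against; your argument is the textbook one and fills that gap correctly, including the sign bookkeeping $(v-\mathcal{P}_{\mathcal{K}}v,\,\mathcal{P}_{\mathcal{K}}v-z)=(\mathcal{P}_{\mathcal{K}}v-v,\,z-\mathcal{P}_{\mathcal{K}}v)$.
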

For the SOCP, it is well known that for the optimal control $u^*$ it holds
\begin{equation}\label{d1}
	\left(\nabla J_u(u^*), v - u^*\right) \geq 0,
	\notag
\end{equation}
which implies that
\begin{equation}\label{d2}
	\left(u^* - \left(u^* - \eta \nabla J_u(u^*)\right), v - u^*\right) \geq 0,
\end{equation}
where $\eta$ is a positive constant. From Lemma \ref{lemma01}, \eqref{d2} implies that
\begin{equation}\label{d3}
	u^* = \mathcal{P}_{\mathcal{K}}(u^* - \eta \nabla J_u(u^*)).
\end{equation}
That is, the optimal control $u^
*$ is the fixed point of $P_{\mathcal{K}} 
(u - \eta \nabla J(u))$ on $\mathcal{K}$. 
The following lemmas state that the projection operators are nonexpansive in the $L_2$-norm,
which form the theoretical foundation for our 
later proofs.
\begin{lemma}
	For the projection $\mathcal{P}_{
		\mathcal{K}}$, it holds that
	\[
	\|\mathcal{P}_{
		\mathcal{K}}w - \mathcal{P}_{
		\mathcal{K}}z\|_2 \leq \|w - z\|_2,
	\]
	for any $w, z \in L^2([0, T]; \mathbb{R}^m)$.
\end{lemma}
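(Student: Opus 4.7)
The plan is to reduce nonexpansivity to the variational characterization of the projection given in Lemma 1, combined with the Cauchy--Schwarz inequality on $L^2([0,T]; \mathbb{R}^m)$. This is the standard Hilbert-space argument for metric projections onto closed convex sets, and the earlier lemma provides exactly the variational inequality needed to run it.

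First I would introduce the shorthand $p := \mathcal{P}_{\mathcal{K}}w$ and $q := \mathcal{P}_{\mathcal{K}}z$. Since $\mathcal{K}$ is closed, both $p$ and $q$ lie in $\mathcal{K}$ and are therefore legitimate test points in Lemma 1. Applying Lemma 1 with $v := w$ and test element $q$ yields $(p - w,\, q - p) \geq 0$; applying it with $v := z$ and test element $p$ yields $(q - z,\, p - q) \geq 0$.

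Next I would add these two inequalities. Writing $q - p$ as the common direction and $p - q$ as its negative, the cross terms $-(w, q - p)$ and $-(z, p - q) = (z, q - p)$ combine into $(z - w,\, q - p)$, while the quadratic terms $(p, q - p) + (q, p - q) = -\|q - p\|_2^2$ survive. The sum therefore collapses to $\|q - p\|_2^2 \leq (z - w,\, q - p)$. Invoking the Cauchy--Schwarz inequality on the right-hand side gives $\|q - p\|_2^2 \leq \|z - w\|_2 \, \|q - p\|_2$, and dividing by $\|q - p\|_2$ (the case $p = q$ being trivial) yields the claim.

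I do not anticipate a genuine obstacle: the only thing to verify carefully is that the inner product and norm $\langle \cdot, \cdot \rangle$ and $\|\cdot\|_2$ appearing in Lemma 1 and in the target inequality refer to the same $L^2$ structure so that Cauchy--Schwarz can be invoked without bookkeeping, and that each projection may be used as a valid test vector against the other, which is immediate from $p, q \in \mathcal{K}$.
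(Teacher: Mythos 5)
Your proof is correct and follows exactly the route the paper intends: the paper's own proof is the one-line remark ``Using Lemma 1 and Cauchy--Schwarz inequality yields the result,'' and your argument is precisely the standard expansion of that remark (two applications of the variational inequality, summation, Cauchy--Schwarz, and division by $\|q-p\|_2$). The sign bookkeeping in your combination of cross and quadratic terms checks out.
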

\begin{proof}
	Using Lemma \ref{lemma01} and Cauchy-Schwarz inequality yields the result. 
\end{proof}

Having the gradient of $J$ and projection operator $\mathcal{P}_{\mathcal{K}}$ in hand, we can carry out gradient descent optimization to determine the optimal control as follows:
\begin{equation}\label{1007}
	u^{k + 1}_t =  \mathcal{P}_{\mathcal{K}} \left( u^k_t-\eta_k\nabla J_u (u^k_t) \right), \quad k = 0,1,2,\ldots, \quad 0\leq t\leq T,
\end{equation}
where $u^0$ is an initial guess for the optimal control, $\eta_k$ is the stepsize of gradient descent in the $k$th iteration step. For the stochastic gradient descent method introduced in \cite{MR4470545}, we can choose one sample of $X_t$ and $Z_t$ and modify \eqref{1007} as follows:
\begin{equation}\label{1008}
	\begin{aligned}
	u^{k+1}_t &= \mathcal{P}_{\mathcal{K}} \left( u^k_t-\eta_k\left[b_u\left({X}^k_t,u^k_t\right){Y}^k_t + \sigma_u\left(u^k_t\right){Z}^k_t+r_u\left({X}^k_t,u^k_t\right)\right] \right),\\
	k &= 0,1,2,\ldots, \quad 0\leq t\leq T.
	\end{aligned}
\end{equation}

\subsection{Temporal discretization for optimal control}
The optimal control $u^*$ is approximated by step function. A uniform time partition $\Pi_N = \{t_0, \ldots, t_N\}$ over $[0, T]$ is introduced:
\[
0 = t_0 < t_1 < \ldots < t_N = T, \; h = t_{n+1} - t_n = T/N, 
\]
where $N$ is the partition number, which is equivalent to the depth of stochastic neural networks. We define the associated space of piecewise constant functions by
$$\mathcal{U}_N [0, T]= \left\{ u \in  L^2([0, T]; \mathbb{R}^m) \mid u = \sum_{n=1}^{N} \alpha_n 1_{[t_n, t_{n+1})} \text{ a.e., } \alpha_n \in \mathbb{R}^m \right\}.$$
Let $\mathcal{K}_N[0, T] = \mathcal{K}[0, T] \cap \mathcal{U}_N[0, T]$, the approximated problem of \eqref{1004} is given by 
\begin{equation}\label{10044}
	\text{Find } u^{*,N} \in \mathcal{K}_N[0, T] \text{ such that } J(u^{*,N}) = \inf_{u \in \mathcal{K}_N[0, T]} J(u).
\end{equation}
Numerical implementation of the gradient descent scheme \eqref{1007} requires numerical approximations to the SDE \eqref{1002} and BSDE \eqref{1006}. Hence, we need to solve (for $t \in [0, T ]$) the following FBSDEs:
\begin{equation}
	\left\{
	\begin{aligned}
		\mathrm{d} X_t &= b(X_t, u_t) \mathrm{d} t + \sigma(u_t) \mathrm{d} W_t, \quad\,\, X_{t=0} = X_0, \\
		-\mathrm{d} Y_t &= f(X_t, Y_t, u_t) \mathrm{d}t - Z_{t} \mathrm{d} W_t, \qquad\, Y_T = \Phi_x(X_T, \varGamma),
	\end{aligned}
	\right.
\end{equation}
where $f(X_t, Y_t, u_t) =  b_x(X_t, u_t) Y_t + r_x(X_t, u_t)$.
\begin{remark}
Let Assumption \ref{ass} hold, it is well known that the above backward equation is wellposed \cite{peng1991probabilistic}. Moreover, the solutions $Y_t$ and $Z_t$ have the representations
	\begin{equation}\label{20}
		Y_t = \eta(t, X_t), \qquad Z_t = \sigma(u(t)) \partial_x \eta(t, X_t),
	\end{equation}
	where $\eta(t, x) : [0, T] \times \mathbb{R}^p \rightarrow \mathbb{R}^q$ is the solution of the following parabolic PDE
	\begin{equation}
		\mathcal{L}^0 \eta(t, x) = -f\Big(x, \eta(t, x), u(t)\Big), \qquad \eta(T, x) = \Phi_x(x, \Gamma),
	\end{equation}
	with
	\begin{equation*}
		\mathcal{L}^0 \eta(t, x) = \partial_t \eta(t, x) + b(x, u(t)) \partial_x \eta(t, x) + \frac{1}{2} \sigma(u(t))^2 \partial_{xx} \eta(t, x).
	\end{equation*}
	The representation in \eqref{20} is the so called nonlinear Feynman-Kac formula \cite{peng1991probabilistic}. Furthermore, if $b \in C_b^4, f \in C_b^{4,4}$, and $\Phi_x \in C_b ^{4+\alpha}$ for some $\alpha \in (0, 1)$, then $\eta \in C_b^4$. 
\end{remark}
In order to obtain a high-order discretization method for control variable $u$ than \cite{archibald2024numerical}, an efficient numerical scheme in \cite{zhao2014numerical} for decoupled FBSDEs has been implemented: 
\begin{align}
	Y_n^N &= \mathbb{E}_{t_n}^{X_n^N} \left[ Y_{n+1}^N \right] + \frac{1}{2} h f_n^N + \frac{1}{2} h \mathbb{E}_{t_n}^{X_n^N} \left[ f_{n+1}^N \right], \label{b1}\\
	\frac{1}{2} h Z_n^N &= \mathbb{E}_{t_n}^{X_n^N} \left[ Y_{n+1}^N \Delta \tilde{W}_{t_{n+1}} \right] + h \mathbb{E}_{t_n}^{X_n^N} \left[ f_{n+1}^N \Delta \tilde{W}_{t_{n+1}} \right], \label{b2}
\end{align}
with
\begin{equation}\label{b3}
	X_{n + 1}^N = X_n^N + h b(X_n^N,u_{t_n})+\sigma(u_{t_n})\Delta W_{t_{n+1}}, \quad n = 0,1,2,\ldots,N - 1,
\end{equation}
where $X_{n+1}^N$, $Y_{n}^N$, $Z_{n}^N$, $f_{n+1}^N$ are the numerical approximation for $X_{t_{n+1}}^{t_n,X_n^N}$, $Y_{t_n}^{t_n,X_n^N}$, $Z_{t_n}^{t_n,X_n^N}$, $f_{t_{n+1}}^{t_n, X_n^N}$
respectively, and $\Delta \tilde{W}_s$ is defined by
\begin{equation}\label{a7}
	\Delta \tilde{W}_s = 2 \Delta W_s - \frac{3}{h} \int_{t_n}^s (r - t_n) \mathrm{d} W_r.
\end{equation}
The next lemma shows the convergence of the numerical solutions to BSDEs. 
\begin{lemma}
	Assume Assumption \ref{ass} holds, and $f \in C_b^{4,4}, b \in C_{b}^{4}, \Phi_x \in C_b^{4+\alpha}, \alpha \in (0, 1)$, $\mathbb{E}[|Y_{t_N}^{t_N, X_N^N} - Y_N^N|^2] \leq Ch^2$, $\mathbb{E}[|Z_{t_N}^{t_N, X_N^N} - Z_N^N|^2] \leq Ch^2$, then we obtain the error estimate of scheme \eqref{b1} \eqref{b2} as
	\begin{equation}
		\mathbb{E}\left[\|Y_{t_n}^{t_n,X_n^N} - Y_n^N\|_2^2\right] + h \sum_{i=n}^{N-1} \mathbb{E}\left[\|Z_{t_i}^{t_i,X_i^N} - Z_i^N\|_2^2\right] \leq Ch^2.
	\end{equation}
\end{lemma}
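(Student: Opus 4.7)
The plan is to follow the strategy developed by Zhao et al.\ for the second-order Crank--Nicolson-type scheme for decoupled FBSDEs. I would first establish \emph{reference equations} satisfied exactly by the true conditional solutions $Y_{t_n}^{t_n,X_n^N}$ and $Z_{t_n}^{t_n,X_n^N}$ that have the same algebraic form as the numerical scheme \eqref{b1}--\eqref{b2} but carry additional residual terms $R_Y^n$ and $R_Z^n$ coming from the quadrature of the $Y$-integral and the integral of $Z_s \, dW_s$. Subtracting the scheme from the reference equations then produces a recursion for the errors $e_Y^n := Y_{t_n}^{t_n,X_n^N} - Y_n^N$ and $e_Z^n := Z_{t_n}^{t_n,X_n^N} - Z_n^N$, which can be closed by a discrete Gronwall argument running backwards in time from $n=N$ to $n=0$.

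To derive the $Y$-reference, I would integrate the BSDE on $[t_n, t_{n+1}]$, take the conditional expectation $\mathbb{E}_{t_n}^{X_n^N}$, and apply the trapezoidal rule to $\int_{t_n}^{t_{n+1}} \mathbb{E}[f_s]\, ds$. Using the Feynman--Kac representation \eqref{20} together with the regularity $\eta \in C_b^4$ (guaranteed by the hypotheses $f \in C_b^{4,4}$, $b \in C_b^4$, $\Phi_x \in C_b^{4+\alpha}$) and an It\^{o}--Taylor expansion of $f_s = f(X_s, \eta(s, X_s), u_s)$, one shows $|R_Y^n| \leq Ch^3$. For the $Z$-reference, I would multiply the BSDE by $\Delta \tilde{W}_{t_{n+1}}$ before taking the conditional expectation; the specific linear combination defining $\Delta \tilde{W}_s$ in \eqref{a7} is engineered so that the leading-order error terms in the It\^{o}--Taylor expansion of $\int_{t_n}^{t_{n+1}} Z_s\, dW_s \cdot \Delta \tilde{W}_{t_{n+1}}$ cancel, yielding $|R_Z^n| \leq Ch^3$ as well.

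With the local truncation errors in hand, the error recursion reads schematically
\begin{equation*}
e_Y^n = \mathbb{E}_{t_n}^{X_n^N}[e_Y^{n+1}] + \tfrac{h}{2}\bigl(f_n^{\mathrm{true}} - f_n^N\bigr) + \tfrac{h}{2}\mathbb{E}_{t_n}^{X_n^N}\bigl[f_{n+1}^{\mathrm{true}} - f_{n+1}^N\bigr] + R_Y^n,
\end{equation*}
and analogously for $e_Z^n$. Using the Lipschitz property of $f$ in $(x,y)$ from Assumption \ref{ass}, Cauchy--Schwarz for the conditional expectations, and Young's inequality to absorb $e_Y^n$ and $h\, e_Z^n$ terms, one obtains an inequality of the form $\mathbb{E}|e_Y^n|^2 \leq (1+Ch)\,\mathbb{E}|e_Y^{n+1}|^2 + Ch\sum_i \mathbb{E}|e_Z^i|^2 + Ch \cdot h^4$, while the $Z$-equation gives an additional bound on $h\,\mathbb{E}|e_Z^n|^2$ in terms of the $Y$-errors and $h^4$. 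Summing these inequalities backwards and invoking the discrete Gronwall lemma together with the terminal hypothesis $\mathbb{E}|e_Y^N|^2, \mathbb{E}|e_Z^N|^2 \leq Ch^2$ gives the desired $Ch^2$ estimate.

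The main obstacle is establishing the local truncation bounds $|R_Y^n|, |R_Z^n| \leq Ch^3$ rigorously. This requires iterated applications of the It\^{o} formula to $\eta(s, X_s)$ and $\partial_x \eta(s, X_s)$ to produce the full It\^{o}--Taylor expansion, and verifying that all the third-moment cross terms inherited from the definition of $\Delta \tilde{W}$ indeed combine to eliminate the $O(h^2)$ local contributions. A secondary delicate point is the simultaneous treatment of the coupled $(e_Y, e_Z)$ recursion: because $h\, e_Z$ and $e_Y$ appear on an equal footing in the summed bound, the Young-inequality weights must be chosen carefully so that the $e_Z$ contributions are absorbed rather than amplified through the backwards summation.
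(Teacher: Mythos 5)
First, a point of reference: the paper itself supplies no proof of this lemma --- it is stated and implicitly imported from the cited error analyses of Zhao et al.\ for the Crank--Nicolson-type scheme \eqref{b1}--\eqref{b2} --- so your proposal can only be measured against that standard argument, and in structure (reference equations for $Y_{t_n}^{t_n,X_n^N}$, $Z_{t_n}^{t_n,X_n^N}$ with residuals, an error recursion, a backward discrete Gronwall argument) it is exactly that argument. There is, however, one concrete inconsistency you should repair. You claim $|R_Y^n|,|R_Z^n|\leq Ch^3$; if that were true, the recursion you write down would deliver $\mathbb{E}[|e_Y^n|^2]+h\sum_i\mathbb{E}[|e_Z^i|^2]\leq Ch^4$, i.e.\ second-order mean-square convergence, whereas the lemma asserts only $Ch^2$. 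The reason the rate stops at $Ch^2$ is the Euler discretization \eqref{b3} of the forward process: the reference equation for $Y$ involves $\mathbb{E}_{t_n}^{X_n^N}\bigl[\eta(t_{n+1},X_{t_{n+1}}^{t_n,X_n^N})\bigr]$ along the exact flow, while the scheme propagates $\mathbb{E}_{t_n}^{X_n^N}[Y_{n+1}^N]$ as a function of the Euler point $X_{n+1}^N$, and the discrepancy is the one-step weak error of Euler for a $C_b^4$ test function, which is $O(h^2)$, not $O(h^3)$. The quadrature and $\Delta\tilde W$ cancellations you describe do give $O(h^3)$ for the pure backward discretization, but the forward contribution dominates; with the corrected bound $|R^n|\leq Ch^2$ your Gronwall argument yields precisely the stated $Ch^2$ and no more.

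Second, the step you defer as ``delicate'' is genuinely where the schematic argument as written fails. Applying Cauchy--Schwarz directly to $\mathbb{E}_{t_n}^{X_n^N}[e_Y^{n+1}\Delta\tilde W_{t_{n+1}}]$ with $\mathbb{E}[(\Delta\tilde W_{t_{n+1}})^2]=h$ gives $h^2\,\mathbb{E}[|e_Z^n|^2]\leq Ch\,\mathbb{E}[|e_Y^{n+1}|^2]+\cdots$, and summing $h\,\mathbb{E}[|e_Z^n|^2]$ over the $N$ indices then loses a full factor of $h$, producing only $Ch$ for the $Z$-sum instead of $Ch^2$. You must first center, replacing $e_Y^{n+1}$ by $e_Y^{n+1}-\mathbb{E}_{t_n}^{X_n^N}[e_Y^{n+1}]$ (legitimate since $\mathbb{E}_{t_n}^{X_n^N}[\Delta\tilde W_{t_{n+1}}]=0$ by \eqref{a7}), and exploit the resulting martingale-difference orthogonality when the $Z$-inequality is substituted into the summed $Y$-recursion; this is how the cited analyses close the coupled estimate. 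Choosing Young-inequality weights carefully, as you propose, is not sufficient without this centering step.
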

\begin{remark}
	Up to now, we have proposed the numerical schemes for BSDE \eqref{1006} to implement the gradient decent scheme \eqref{1007} numerically. However, in order to implement numerical schemes \eqref{b1}-\eqref{b3}, one needs to approximate the (conditional) expectations. One of well-known numerical methods for approximating expectations is Monte Carlo simulation, which requires high computational cost especially when the dimension of the controlled state $X_t$ is high and the discretization number $N$ is large. Thus, a natural problem is whether such an expectation can be removed in computation as we can randomly select one data sample in SGD and the controlled state process $X_t$ can be viewed as ``pseudo-data''. The detailed process and proof will be discussed later.
\end{remark}

To address the aforementioned computational challenges in Monte Carlo simulation, we introduce an enhanced sample-wise stochastic gradient descent algorithm to carry out the optimization procedure.
First, the sample-wise numerical solutions $X_n^k$ of $X$, and  $(Y_n^k, Z_n^k)$ of $(Y, Z)$, are given by
\begin{equation}\label{1014}
	\begin{aligned}
		X_{n+1}^k  =& X_{n}^k + h b(X_{n}^k,u_{t_n}^k)+\sigma(u_{t_n}^k) \omega_{n+1}^{k},\\
		Y^{k}_{n} =& Y_{n + 1}^{k} + \frac{1}{2} h \left[b_x (X_{n+1}^{k},u^k_{t_{n+1}}) Y_{n+1}^{k}+r_x(X_{n+1}^{k},u^k_{t_{n+1}})\right]\\
		&+ \frac{1}{2} h \left[b_x (X_{n}^{k},u^k_{t_{n}}) Y_{n}^{k}+r_x(X_{n}^{k},u^k_{t_{n}})\right],\\
		\frac{1}{2}hZ_{n}^{k} =& Y_{n+1}^{k} \tilde{\omega}_{n+1}^k + h \left[b_x (X_{n+1}^{k},u^k_{t_{n+1}}) Y_{n+1}^{k}+r_x(X_{n+1}^{k},u^k_{t_{n+1}})\right] \tilde{\omega}_{n+1}^k,
	\end{aligned}
\end{equation}
where $\omega_{n+1}^k$ and $\tilde{\omega}_{n+1}^k$ are the samples for $\Delta W_{t_{n+1}}$, $\Delta \tilde{W}_{t_{n+1}}$ respectively.
With \eqref{1014}, we approximate the gradient $\nabla J_{u}$ by
\begin{equation}\label{1015}
	\nabla j_{u}^{k}\left(u_{t_n}^{k}\right) := b_{u}\left(X_{n}^{k}, u_{t_n}^{k}\right) Y_{n}^{k} + \sigma_{u}\left(u_{t_n}^{k}\right) Z_{n}^{k} + r_{u}\left(X_{n}^{k}, u_{t_n}^{k}\right). 
\end{equation}
Let $\mathcal{P}_{\mathcal{K}_N}$ be the projection operator onto the ${\mathcal{K}_N}[0, T]$. We can show that
\begin{equation}\label{17}
	u^{*,N} = \mathcal{P}_{\mathcal{K}_N} \left( u^{*,N} - \eta \nabla J_u(u^{*,N}) \right),
\end{equation}

and the following lemma:
\begin{lemma}
	For the projection $\mathcal{P}_{
		\mathcal{K}_N}$, it holds that
	\[
	\|\mathcal{P}_{
		\mathcal{K}_N}w - \mathcal{P}_{
		\mathcal{K}_N}z\|_2 \leq \|w - z\|_2,
	\]
	for any $w, z \in L^2([0, T]; \mathbb{R}^m)$.
\end{lemma}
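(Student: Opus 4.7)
The plan is to reduce this to exactly the same argument that proved the nonexpansiveness of $\mathcal{P}_{\mathcal{K}}$ in the earlier lemma. The only new content is to verify that $\mathcal{K}_N[0,T]$ is itself a nonempty, closed, convex subset of $L^2([0,T];\mathbb{R}^m)$, so that Lemma~\ref{lemma01} is applicable with $\mathcal{K}$ replaced by $\mathcal{K}_N$. For this I would note that $\mathcal{U}_N[0,T]$ is a finite-dimensional subspace (isomorphic to $(\mathbb{R}^m)^N$ via the coefficients $\alpha_n$), hence closed; convexity of $\mathcal{U}_N$ is immediate; and $\mathcal{K}_N[0,T]=\mathcal{K}[0,T]\cap\mathcal{U}_N[0,T]$ inherits closedness and convexity from the intersection of two closed convex sets (recall that $\mathcal{C}\subset\mathbb{R}^m$ is convex and closed by assumption, so $\mathcal{K}[0,T]$ is convex and closed in $L^2$).

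Once $\mathcal{K}_N$ is established as a closed convex subset, the analogue of Lemma~\ref{lemma01} yields the variational characterization: for every $\zeta\in\mathcal{K}_N$,
\begin{equation*}
(\mathcal{P}_{\mathcal{K}_N}v-v,\;\zeta-\mathcal{P}_{\mathcal{K}_N}v)\geq 0.
\end{equation*}
I would then apply this twice, once with $v=w$ and test function $\zeta=\mathcal{P}_{\mathcal{K}_N}z$, and once with $v=z$ and test function $\zeta=\mathcal{P}_{\mathcal{K}_N}w$, giving
\begin{equation*}
(\mathcal{P}_{\mathcal{K}_N}w-w,\;\mathcal{P}_{\mathcal{K}_N}z-\mathcal{P}_{\mathcal{K}_N}w)\geq 0,\quad (\mathcal{P}_{\mathcal{K}_N}z-z,\;\mathcal{P}_{\mathcal{K}_N}w-\mathcal{P}_{\mathcal{K}_N}z)\geq 0.
\end{equation*}

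Adding these two inequalities and rearranging gives $\|\mathcal{P}_{\mathcal{K}_N}w-\mathcal{P}_{\mathcal{K}_N}z\|_2^2\leq(w-z,\,\mathcal{P}_{\mathcal{K}_N}w-\mathcal{P}_{\mathcal{K}_N}z)$, and a single application of Cauchy--Schwarz finishes the bound. There is no real obstacle here; the argument is essentially a verbatim copy of the proof of the earlier lemma about $\mathcal{P}_{\mathcal{K}}$. The only point that deserves an explicit line in the write-up is the observation that $\mathcal{K}_N$ is closed and convex, so that the projection is well defined and the variational inequality characterization transfers verbatim.
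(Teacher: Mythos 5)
Your proof is correct and follows essentially the same route as the paper, which proves the analogous lemma for $\mathcal{P}_{\mathcal{K}}$ by combining the variational characterization of Lemma~\ref{lemma01} with the Cauchy--Schwarz inequality and leaves the $\mathcal{P}_{\mathcal{K}_N}$ case as an immediate analogue. Your explicit verification that $\mathcal{K}_N[0,T]=\mathcal{K}[0,T]\cap\mathcal{U}_N[0,T]$ is nonempty, closed and convex is a detail the paper omits but is a worthwhile addition.
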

Then we can derive the sample-wise stochastic gradient descent (SGD) scheme as follows:
\begin{equation}\label{1016}
	u_{t_{n}}^{k+1} = \mathcal{P}_{\mathcal{K}_N} \left(u_{t_{n}}^{k} - \eta_{k} \nabla j_{u}^{k} (u_{t_{n}}^{k})\right), \quad k = 0, 1, 2, \ldots, \quad 0 \leq n \leq N.
\end{equation}
\begin{remark}
	Although a sample-wise backpropagation method based on Euler method has already been proposed in \cite{archibald2024numerical}, it is necessary to notice that the Euler method is a simple fixed-step numerical discrete method, and the approximate solution of calculus equations is limited, which also limits its convergence order. Our backpropagation method is proposed to enhance the numerical accuracy of 
	control variables in the network.
\end{remark}
We summarize the algorithm flow in Algorithm 1. To facilitate comparison with \cite{archibald2024numerical}, we highlight the part belonging to our scheme in red only.
\begin{algorithm}
	\caption{A high-order backpropagation method}
	\begin{algorithmic}[1]
		\State Formulate the Neural SDE \eqref{1002} as the stochastic optimal control problem \eqref{1004} and give a partition $\Pi^N$ to the control problem as the depth of stochastic neural network.
		\State Choose the number of SGD iteration steps $K \in \mathbb{N}$, the learning rate $\{\eta_k\}_k$ and the initial guess for the optimal control $\{u_{t_n}^0\}_n$.
		\For{SGD iteration steps $k = 0, 1, 2, \cdots, K-1$}
		\State Simulate one realization of the state process $\{X_n^k\}_n$ through the scheme \eqref{b3}.
		\State \textcolor{red}{Simulate $\{({Y}_n^k, {Z}_n^k)\}_n$ through the schemes \eqref{b1},  \eqref{b2}};
		\State Calculate the gradient process and update the estimated optimal control $\{u_{t_n}^{k+1}\}_n$ through the SGD iteration scheme \eqref{1016};
		\EndFor
		\State The estimated optimal control is given by $\{u_{t_n}^K\}_n$;
	\end{algorithmic}
\end{algorithm}
\section{Convergence analysis}\label{sec2}
In this section, we analyze the convergence of the SGD algorithm \eqref{1014}-\eqref{1016}. Under the assumption that the cost function for the optimal control is convex, we derive
the first-order convergence rate for our algorithm in the meansquare sense.
\subsection{Sample-wise numerical solution as an unbiased estimation}
To illustrate that the stochastic approximation $\nabla j_u^k(u_{t_n}^k)$ of above high-order sample-wise backpropagation method for Neural SDE is indeed an unbiased estimator of the gradient $\nabla J_u^N (u_{t_n}^k)$, we first introduce 
the fact that the sample-wise solutions $Y_n^k$ and $Z_n^k$ introduced in \eqref{1014} are equivalent to the classic  numerical solutions $Y_n^N$ and $Z_n^N$ introduced in \eqref{b1}-\eqref{b2} under conditional expectation $\mathbb{E}_{t_n}^{X_n^N}[\cdot]$. Specifically, we have the following proposition, which is also the foundation of the convergence analysis.
\begin{proposition}\label{pro1}
	For given estimated control $u^k \in \mathcal{K}_N$, let $Y_n^{k,N}$ and $Z_n^{k,N}$ be the numerical solutions defined in \eqref{b1} \eqref{b2}. Then the following identities hold:
	\begin{equation}
		\mathbb{E}_{t_n}^{X_n^N}[Y_n^{k}] = Y_n^{k,N} \big|_{X_n^N}, \quad \mathbb{E}_{t_n}^{X_n^N}[Z_n^{k}] = Z_n^{k,N} \big|_{X_n^N}, \quad 0 \leq n \leq N-1,
	\end{equation}
	and therefore we have $\mathbb{E}[Y_n^{k}] = \mathbb{E}[Y_n^{k,N}]$ and $\mathbb{E}[Z_n^{k}] = \mathbb{E}[Z_n^{k,N}]$.
\end{proposition}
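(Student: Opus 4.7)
The plan is to prove the two identities simultaneously by backward induction on $n$, running from $n=N$ down to $n=0$, using three ingredients: the Markov property of the Euler-discretized forward process $\{X_j^N\}$, the tower identity $\mathbb{E}_{t_n}^{X_n^N}[\cdot]=\mathbb{E}_{t_n}^{X_n^N}\bigl[\mathbb{E}_{t_{n+1}}^{X_{n+1}^N}[\cdot]\bigr]$, and the fact that the sample-wise forward path $\{X_j^k\}_{j\ge n}$, conditioned on $X_n^N$, has the same distribution as $\{X_j^N\}_{j\ge n}$ (both are generated by \eqref{b3} driven by the same Brownian increments).

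The base case $n=N$ is immediate since $Y_N^k=\Phi_x(X_N^k,\Gamma)$ and $Y_N^{k,N}=\Phi_x(X_N^N,\Gamma)$, and there is no $Z$ term at the terminal step. For the inductive step on the $Y$-equation, I would first make the implicit recursion in \eqref{1014} explicit by solving for $Y_n^k$:
\begin{equation*}
Y_n^k \;=\; \frac{1}{1-\tfrac{h}{2}b_x(X_n^k,u_{t_n}^k)}\Bigl(Y_{n+1}^k+\tfrac{h}{2}f_{n+1}^k+\tfrac{h}{2}r_x(X_n^k,u_{t_n}^k)\Bigr).
\end{equation*}
Applying $\mathbb{E}_{t_n}^{X_n^N}[\cdot]$ freezes the factors depending only on $X_n^k=X_n^N$, and the tower property with $\mathbb{E}_{t_{n+1}}^{X_{n+1}^N}$ combined with the induction hypothesis converts $\mathbb{E}_{t_n}^{X_n^N}[Y_{n+1}^k]$ into $\mathbb{E}_{t_n}^{X_n^N}[Y_{n+1}^{k,N}]$; the same reasoning applied to $f_{n+1}^k=b_x(X_{n+1}^k,u_{t_{n+1}}^k)Y_{n+1}^k+r_x(X_{n+1}^k,u_{t_{n+1}}^k)$ converts it into $\mathbb{E}_{t_n}^{X_n^N}[f_{n+1}^{k,N}]$. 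Rearranging reproduces \eqref{b1} evaluated at $X_n^N$, which gives the desired identity for $Y$.

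For the $Z$-equation, the recursion is explicit, so I would apply $\mathbb{E}_{t_n}^{X_n^N}[\cdot]$ directly to \eqref{1014}. The key step is to condition first on $\mathcal{F}_{t_{n+1}}$: since $\tilde\omega_{n+1}^k$ is $\mathcal{F}_{t_{n+1}}$-measurable, it factors out, and then the Markov property allows the inner conditional expectation to be taken as $\mathbb{E}_{t_{n+1}}^{X_{n+1}^N}[Y_{n+1}^k]=Y_{n+1}^{k,N}\bigl|_{X_{n+1}^N}$ by the induction hypothesis. An analogous manipulation handles $f_{n+1}^k\tilde\omega_{n+1}^k$. Matching with \eqref{b2} produces the $Z$ identity, and the unconditional statements $\mathbb{E}[Y_n^k]=\mathbb{E}[Y_n^{k,N}]$ and $\mathbb{E}[Z_n^k]=\mathbb{E}[Z_n^{k,N}]$ follow by taking the total expectation.

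The main obstacle I anticipate is bookkeeping: the $Y$-scheme is implicit, so one must invert it before conditioning, and in the $Z$-step the multiplicative weight $\tilde\omega_{n+1}^k$ is $\mathcal{F}_{t_{n+1}}$-measurable but not a function of $X_{n+1}^N$ alone, so the Markov reduction must be performed on the $Y$-factor only and the order of conditioning (first $\mathcal{F}_{t_{n+1}}$, then extract the Markovian inner expectation) must be handled carefully. Once this two-step conditioning is set up cleanly, the rest is routine verification that the sample-wise scheme, after averaging, reproduces exactly the conditional-expectation scheme \eqref{b1}--\eqref{b2}.
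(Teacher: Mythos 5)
Your proposal is correct and follows essentially the same route as the paper's proof: both rest on the distributional equivalence of the sampled increments $(\omega_{n+1}^k,\tilde\omega_{n+1}^k)$ with $(\Delta W_{t_{n+1}},\Delta\tilde W_{t_{n+1}})$, invert the implicit $Y$-scheme before conditioning, use the terminal condition $Y_N^k=\Phi_x=Y_N^{k,N}$, and propagate backward via the tower property. The paper simply writes out the single step $n=N-1$ and states that the general case follows by repeating it, whereas you phrase the identical argument as an explicit backward induction with the two-step conditioning for the $Z$-term spelled out.
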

\begin{proof}
	Observe that the random variable $\omega_n^k$ in the scheme \eqref{1014} follows the same distribution as $\Delta W_{t_n}$ appeared in \eqref{b3}. Consequently, $\omega_n^k$ and $\Delta W_{t_n}$ are equivalent under expectation. More generally, for any function $\phi(\{\omega_n^k\}_n)$ acting on the sample path $\{\omega_n^k\}_n$, the equality $\mathbb{E}[\phi(\{\omega_n^k\}_n)] = \mathbb{E}[\phi(\{\Delta W_{t_n}\}_n)]$ holds. Similarly, following the same argument, we also have $\mathbb{E}[\phi(\{\tilde{\omega}_n^k\}_n)] = \mathbb{E}[\phi(\{\Delta \tilde{W}_{t_n}\}_n)]$.
	
	The proof proceeds by first examining the case $n = N-1$  (i.e., take one step back from the terminal time), and let $h$ be sufficiently small, such that $ 1 - 	\frac{1}{2} h b_x (X_{N-1}^{k},u^k_{t_{N-1}}) \neq 0$, we have
	\begin{equation}
		\begin{aligned}
			\mathbb{E}_{t_{N-1}}^{X_{N-1}^N} \left[Y_{N-1}^k \right] =& \mathbb{E}_{t_{N-1}}^{X_{N-1}^N} \Bigg[ \left( 1 - \frac{1}{2} h b_x (X_{N-1}^{k},u^k_{t_{N-1}}) \right)^{-1} \\			
			& \Bigg(Y_N^k + \frac{1}{2} h \left[b_x (X_{N}^{k},u^k_{t_{N}}) Y_{N}^{k}+r_x(X_{N}^{k},u^k_{t_{N}})\right] + \frac{1}{2} h r_x(X_{N-1}^{k},u^k_{t_{N-1}})\Bigg) \Bigg].\\
		\end{aligned}
		\notag
	\end{equation}

	Since $Y_N^k = \Phi_x = Y_N^{k,N}$ and $\mathbb{E}_{t_{N-1}}^{X_{N-1}^N} [X_N^k] = \mathbb{E}_{t_{N-1}}^{X_{N-1}^N}[X_N^{k,N}]$, where $X_N^{k,N}$ is the approximated solution introduced in \eqref{b3} with the given control $u^k$, the above equation becomes
		\begin{align}\label{111}
			&\mathbb{E}_{t_{N-1}}^{X_{N-1}^N} \left[Y_{N-1}^k \right] \notag\\
			=& \mathbb{E}_{t_{N-1}}^{X_{N-1}^N} \Bigg[ \left( 1 - 	\frac{1}{2} h b_x (X_{N-1}^{k,N},u^k_{t_{N-1}}) \right)^{-1} \\			
			& \Bigg(Y_N^{k,N} + \frac{1}{2} h \left[b_x (X_{N}^{k,N},u^k_{t_{N}}) Y_{N}^{k,N}+r_x(X_{N}^{k,N},u^k_{t_{N}})\right] + \frac{1}{2} h r_x(X_{N-1}^{k,N},u^k_{t_{N-1}})\Bigg) \Bigg]\notag\\
			=& \mathbb{E}_{t_{N-1}}^{X_{N-1}^N} \Bigg[ \left( 1 - 	\frac{1}{2} h b_x (X_{N-1}^{k,N},u^k_{t_{N-1}})\right)^{-1} \left( 1 - 	\frac{1}{2} h b_x (X_{N-1}^{k,N},u^k_{t_{N-1}}) \right)Y_{N-1}^{k,N} \Bigg]\notag\\
			=& Y_{N-1}^{k,N} \big|_{X_{N-1}^N}.\notag
		\end{align}
	Following the same argument, we also have 
		\begin{align}\label{112}
			&\mathbb{E}_{t_{N-1}}^{X_{N-1}^N} \left[ Z_{N-1}^k \right] = \mathbb{E}_{t_{N-1}}^{X_{N-1}^N} \left[ \frac{2Y_N^k \tilde{\omega}_{N}^k}{h} + 2 \left[b_x (X_{N}^{k},u^k_{t_{N}}) Y_{N}^{k}+r_x(X_{N}^{k},u^k_{t_{N}})\right] \tilde{\omega}_{n+1}^k \right] \notag\\
			=&\mathbb{E}_{t_{N-1}}^{X_{N-1}^N} \left[ \frac{2Y_N^{k,N} \Delta \tilde{W}_{t_{N}}^k}{h} + 2 \left[b_x (X_{N}^{k,N},u^k_{t_{N}}) Y_{N}^{k,N}+r_x(X_{N}^{k,N},u^k_{t_{N}})\right] \Delta \tilde{W}_{t_{N}}^k \right]\\
			= &Z_{N-1}^{k,N} \big|_{X_{N-1}^N}.\notag
		\end{align}
	Then, by repeatedly applying the equality \eqref{111}, \eqref{112} and the tower property, we obtain the desired result.
\end{proof}
Next, we proceed to derive the unbiased property of the gradient of the cost functional $J$. The analysis begins by constructing an augmented $\sigma$-algebra $\mathcal{G}_k := \sigma(\omega^i, \gamma^i, 0 \leq I \leq k-1)$ generated by the Gaussian random variables $\omega^i$, which we use to
generate state sample path $X^k$ in the sample-wise scheme \eqref{1014}, and the data sample $\gamma^i$ generated by the training data $\Gamma$. As demonstrated in the above proposition, we see that the stochastic approximation $\nabla j_u^k(u_{t_n}^k)$ introduced in \eqref{1015} is an unbiased estimator for the gradient $\nabla J_u^N(u_{t_n}^k)$ given $\mathcal{G}_k$, i.e.
	\[
	\mathbb{E}[\nabla j_u^k(u_{t_n}^k) \mid \mathcal{G}_k] = \nabla J_u^N(u_{t_n}^k).
	\]
Denote $\mathbb{E}^k[\cdot] := \mathbb{E}[\cdot \mid \mathcal{G}_k]$ in the rest of this paper for convenience of presentation. The following lemma is about the boundedness of the sample-wise solution $Y_n^k$ and the linear growth property for $Z_n^k$ with any approximate control $u^k \in \mathcal{K}_N$.
\begin{lemma}\label{lemma1}
	Under Assumption \ref{ass} (a)--(e), there exists a constant $C > 0$, such that
	\[
	\sup_{0 \leq n \leq N} \mathbb{E}[(Y_n^k)^2] \leq C, \quad \sup_{0 \leq n \leq N} \mathbb{E}[(Z_n^k)^2] \leq CN.
	\]
\end{lemma}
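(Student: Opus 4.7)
The plan is a backward induction on $n$ using only the uniform boundedness of $b_x$, $r_x$, and $\Phi_x$ furnished by Assumption~\ref{ass}(b) and (e); neither convexity nor the Feynman--Kac regularity of Remark~1 is required. In fact I expect to obtain a pathwise bound on $Y_n^k$ strictly stronger than the claimed $L^2$ control, and to use that pathwise bound to avoid any $L^4$ estimate when handling $Z_n^k$.

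\textbf{Bound on $Y_n^k$.} The $Y$-equation in \eqref{1014} is implicit in $Y_n^k$. Rearranging it gives
\[
\bigl(1 - \tfrac{1}{2}h\,b_x(X_n^k, u_{t_n}^k)\bigr)\, Y_n^k = \bigl(1 + \tfrac{1}{2}h\,b_x(X_{n+1}^k, u_{t_{n+1}}^k)\bigr)\, Y_{n+1}^k + \tfrac{h}{2}\bigl(r_x(X_{n+1}^k, u_{t_{n+1}}^k) + r_x(X_n^k, u_{t_n}^k)\bigr).
\]
For $h$ sufficiently small (depending only on $\|b_x\|_\infty$), the left-hand factor is bounded away from zero, so after dividing one obtains the pointwise recursion $|Y_n^k| \le (1+Ch)\,|Y_{n+1}^k| + Ch$. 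Iterating backward from $Y_N^k = \Phi_x(X_N^k,\Gamma)$, and using $\|\Phi_x\|_\infty < \infty$ (since $\Phi \in C_b^{3+\alpha}$), a discrete Gr\"onwall step yields $|Y_n^k| \le e^{CT}\bigl(|Y_N^k| + 1\bigr) \le C$ almost surely. The first assertion $\sup_n \mathbb{E}[(Y_n^k)^2] \le C$ is then immediate.

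\textbf{Bound on $Z_n^k$.} The $Z$-equation in \eqref{1014} is already explicit in $Z_n^k$:
\[
Z_n^k = \tfrac{2}{h}\Bigl( Y_{n+1}^k + h\bigl[ b_x(X_{n+1}^k, u_{t_{n+1}}^k)\, Y_{n+1}^k + r_x(X_{n+1}^k, u_{t_{n+1}}^k)\bigr]\Bigr)\tilde{\omega}_{n+1}^k.
\]
Inserting the almost-sure bound $|Y_{n+1}^k| \le C$ from the previous step, together with boundedness of $b_x$ and $r_x$, yields $(Z_n^k)^2 \le C h^{-2}\,(\tilde{\omega}_{n+1}^k)^2$. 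The only remaining computation is a single moment: from the definition \eqref{a7} of $\Delta\tilde{W}_s$ together with It\^o isometry, a short direct calculation gives $\mathbb{E}[(\tilde{\omega}_{n+1}^k)^2] = 4h - 6h + 3h = h$. Therefore $\mathbb{E}[(Z_n^k)^2] \le C/h = CN/T \le CN$, as claimed.

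The one point to handle carefully is the inversion of the implicit $Y$-step uniformly in $k$, $n$, and the sample path; this reduces to the condition $h\,\|b_x\|_\infty < 2$, which is imposed at the outset. The linear-in-$N$ blow-up of $Z_n^k$ is then intrinsic rather than an artefact: it reflects that the sample-wise estimator $\tilde{\omega}_{n+1}^k/h$ of the martingale representation has second moment $1/h = N/T$. Notably, the potentially delicate statistical dependence between $Y_{n+1}^k$ and $\tilde{\omega}_{n+1}^k$, which would otherwise force an $L^4$ bound on $Y$ via Cauchy--Schwarz, is sidestepped entirely by first obtaining the pathwise control $|Y_{n+1}^k| \le C$.
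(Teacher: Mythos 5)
Your proof is correct, and it takes a genuinely different (and in one respect sharper) route than the paper's. The paper squares the rearranged implicit $Y$-recursion, uses the elementary inequality $(a+b)^2\le(1+h)a^2+(1+h^{-1})b^2$, takes expectations, and applies the discrete Gr\"onwall inequality at the level of second moments; for $Z$ it simply asserts that ``the same argument'' applies. You instead exploit the uniform boundedness of $b_x$, $r_x$, and $\Phi_x$ to run the recursion pathwise, obtaining the almost-sure bound $|Y_n^k|\le C$, which is strictly stronger than the stated $L^2$ control and costs nothing extra. The real payoff is in the $Z$ estimate: since $Y_{n+1}^k$ depends on $\omega_{n+1}^k,\dots,\omega_N^k$ and $\tilde\omega_{n+1}^k$ is correlated with $\omega_{n+1}^k$, the product $Y_{n+1}^k\tilde\omega_{n+1}^k$ does not factorize under expectation, and an $L^2$-only bound on $Y$ would force a Cauchy--Schwarz step requiring fourth moments --- a gap the paper's one-line ``same argument'' glosses over. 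Your pathwise bound dissolves this issue, and your moment computation $\mathbb{E}[(\tilde\omega_{n+1}^k)^2]=4h-6h+3h=h$ from \eqref{a7} and the It\^o isometry is correct, yielding $\mathbb{E}[(Z_n^k)^2]\le C/h=CN/T$ exactly as claimed. In short: the paper's approach is the standard $L^2$ energy argument, while yours is more elementary, makes the $Z$ step fully rigorous, and delivers a stronger intermediate conclusion.
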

\begin{proof}
	We square both sides of the scheme
	\begin{align}
		\left( 1 - \frac{1}{2} h b_x (X_{n}^{k},u^k_{t_{n}}) \right) Y_{n}^k   = & \left(
		1 + \frac{1}{2} h b_x (X_{n+1}^{k},u^k_{t_{n+1}}) \right)Y_{n+1}^k \notag\\
		& +\frac{1}{2} h r_x (X_{n+1}^{k},u^k_{t_{n+1}}) + \frac{1}{2} h r_x (X_{n}^{k},u^k_{t_{n}})\\
		& \ \left( 1 - \frac{1}{2} h b_x (X_{n}^{k},u^k_{t_{n}}) \right)^2 \left(Y_{n}^k \right)^2  \notag\\			
		\leq &  (1+h)\left(
		1 + \frac{1}{2} h b_x (X_{n+1}^{k},u^k_{t_{n+1}}) \right)^2 \left(Y_{n+1}^{k} \right)^2  \notag\\
		& + \left(1+ \frac{1}{h}\right)\left[  \frac{h^2}{4} {\left(r_x (X_{n+1}^{k},u^k_{t_{n+1}}) +  r_x (X_{n}^{k},u^k_{t_{n}}) \right)}^2 \right],\notag
	\end{align}
	let $h$ be sufficiently small, such that $ 1 - 	\frac{1}{2} h b_x (X_{n}^{k},u^k_{{n}}) \neq 0$, $ 0 \leq n \leq N$, take expectation to obtain
	\begin{equation}
		\begin{aligned}
			& \mathbb{E} \left[ \left(Y_{n}^k \right)^2 \right] \leq  \mathbb{E} \left[(1+h) \left( \frac{1 + \frac{1}{2} h b_x (X_{n+1}^{k},u^k_{t_{n+1}})}{1 - \frac{1}{2} h b_x (X_{n}^{k},u^k_{t_{n}})} \right)^2  \left( Y_{n+1}^k \right)^2 \right]+Ch\\
			\leq & (1+Ch)\mathbb{E} \left[\left(Y_{n+1}^k \right)^2  \right]+Ch.
		\end{aligned}
	\end{equation}
	Then, by the discrete Gronwall inequality, we have
	\begin{equation}
		\sup_{0 \leq n \leq N} \mathbb{E}[(Y_n^k)^2] \leq C.
	\end{equation}
	Following the same argument, we also have
	\begin{equation}
		\sup_{0 \leq n \leq N} \mathbb{E}[(Z_n^k)^2] \leq {CN}.
	\end{equation}
\end{proof}
As a consequence of the above discussions, we have the following lemma.
\begin{lemma}\label{lemma2}
	Under Assumption \ref{ass}, for any $ u^k \in \mathcal{K}_N$, the following estimation holds:
	\begin{equation}
		\mathbb{E}\left[\big\Vert \nabla j_u^k(u^k) - \nabla J_u^N(u^k)\big\Vert_2^2\right] \leq CN.
	\end{equation}
\end{lemma}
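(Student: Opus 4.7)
The plan is to exploit the unbiased-estimator identity $\mathbb{E}^k[\nabla j_u^k(u_{t_n}^k)] = \nabla J_u^N(u_{t_n}^k)$ established in the discussion preceding the lemma (which in turn rests on Proposition~\ref{pro1}), together with the uniform second-moment bounds from Lemma~\ref{lemma1}. The key observation is that once this identity is in hand, the squared $L^2$ error collapses to a sum of conditional variances, and the dominant $\mathcal{O}(N)$ rate is then driven entirely by the $Z_n^k$-contribution.

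First I would unpack the $L^2([0,T];\mathbb{R}^m)$ norm using the piecewise-constant structure of $\mathcal{U}_N$, writing
\[\bigl\|\nabla j_u^k - \nabla J_u^N\bigr\|_2^2 = h\sum_{n=0}^{N-1}\bigl|\nabla j_u^k(u_{t_n}^k) - \nabla J_u^N(u_{t_n}^k)\bigr|^2.\]
Combining the tower property with the unbiased identity gives, termwise,
\[\mathbb{E}\bigl[|\nabla j_u^k(u_{t_n}^k) - \nabla J_u^N(u_{t_n}^k)|^2\bigr] \le \mathbb{E}\bigl[|\nabla j_u^k(u_{t_n}^k)|^2\bigr],\]
since the expected squared deviation of a random variable from its conditional expectation is dominated by its raw second moment. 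Hence it suffices to bound $\mathbb{E}[|\nabla j_u^k(u_{t_n}^k)|^2]$ uniformly in $n$ and then sum with weight $h$.

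For this termwise bound, I would expand the definition~\eqref{1015}, apply an elementary $(a+b+c)^2 \le 3(a^2+b^2+c^2)$ inequality, use Assumption~\ref{ass}(b) to control $b_u$, $\sigma_u$, and $r_u$ uniformly, and invoke Lemma~\ref{lemma1} to get $\mathbb{E}[(Y_n^k)^2] \le C$ and $\mathbb{E}[(Z_n^k)^2] \le CN$. This yields $\mathbb{E}[|\nabla j_u^k(u_{t_n}^k)|^2] \le CN$ uniformly in $n$, and summing $N$ such terms with weight $h = T/N$ produces the claimed $CN$ bound.

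The main conceptual point — rather than an analytic obstacle — is recognizing that the $\mathcal{O}(N)$ rate is sharp and driven by the $\sigma_u Z_n^k$ term. In the sample-wise scheme~\eqref{1014}, $Z_n^k$ is reconstructed from a Brownian increment effectively rescaled by $1/h$, so its second moment necessarily grows like $N$; this single factor of $N$ propagates through the argument and is precisely what later forces the convergence analysis to balance the SGD step count $K$ against the depth $N$. Once Proposition~\ref{pro1} and Lemma~\ref{lemma1} are in place, no further analytic difficulty arises — the remainder is essentially a short variance computation.
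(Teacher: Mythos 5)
Your proof is correct and rests on the same two pillars as the paper's: the moment bounds $\mathbb{E}[(Y_n^k)^2]\le C$, $\mathbb{E}[(Z_n^k)^2]\le CN$ from Lemma~\ref{lemma1} together with the uniform boundedness of $b_u,\sigma_u,r_u$, followed by summing $N$ terms with weight $h=T/N$. The one place you diverge is the first inequality: the paper controls the difference by the crude splitting $\Vert a-b\Vert_2^2\le 2\Vert a\Vert_2^2+2\Vert b\Vert_2^2$ and then bounds $\Vert\nabla J_u^N(u^k)\Vert_2^2$ separately using the analogous second-moment bounds for $Y_n^{k,N}$ and $Z_n^{k,N}$, whereas you invoke the unbiasedness $\mathbb{E}^k[\nabla j_u^k(u_{t_n}^k)]=\nabla J_u^N(u_{t_n}^k)$ and the identity $\mathbb{E}\bigl[(X-\mathbb{E}[X\mid\mathcal{G}_k])^2\bigr]=\mathbb{E}[X^2]-\mathbb{E}\bigl[(\mathbb{E}[X\mid\mathcal{G}_k])^2\bigr]\le\mathbb{E}[X^2]$, which requires checking that $\nabla J_u^N(u_{t_n}^k)$ is $\mathcal{G}_k$-measurable (it is, since $u^k$ is). Your route is marginally cleaner---it avoids having to estimate the second moments of the numerical solutions $Y_n^{k,N},Z_n^{k,N}$ at all---while the paper's is more elementary in that it never uses Proposition~\ref{pro1} inside this lemma. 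Both yield the same $CN$ bound, and your closing remark correctly identifies the $\sigma_u Z_n^k$ term, with its $1/h$ rescaling of the Brownian increment, as the source of the factor $N$.
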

\begin{proof}
Due to Lemma \ref{lemma1} and the boundedness assumptions for $b_u$, $\sigma_u$, and $r_u$, we have that
\begin{equation}
	|\nabla j_u(u_{t_n}^k)|^2 \leq C\left(|Y_n^k|^2 + |Z_n^k|^2\right) \leq CN.
\end{equation}
Then we can obtain
\begin{align*}
	\mathbb{E}\left[\big\Vert \nabla j_u^k(u^k) - \nabla J_u^N(u^k)\big\Vert_2 ^2\right]
	&\leq 2 \mathbb{E}\left[\big\Vert \nabla j_u^k(u^k) \big\Vert_2 ^2 \right] + 2 \mathbb{E}\left[\big\Vert \nabla J_u^N(u^k)\big\Vert_2 ^2 \right] \\
	&\leq 2h \sum_{n=0}^{N-1} |\nabla j_u(u_{t_n}^k)|^2 + Ch \sum_{n=0}^{N-1} \mathbb{E}\Big[\big|b_u(X_{t_n}^{k,N}, u_{t_n}^k) Y_n^{k,N}\big|^2 \\
	&\quad + \big|\sigma_u( u_{t_n}^k) Z_n^{k,N}\big|^2 + \big|r_u(X_n^{k,N}, u_{t_n}^k) \big|^2\Big] \\
	&\leq CN + C h \sum_{n=0}^{N-1}\sup_{0 \leq n \leq N-1} \mathbb{E}\left[|Y_n^{k,N}|^2 + |Z_n^{k,N}|^2\right]+C \\
	&\leq CN+C,
\end{align*}
where $C > 0$ is a generic constant independent of $N$. Hence, we can get the desired result from the above analysis. 
\end{proof}
\subsection{Convergence analysis}
	In this subsection, we will turn to error estimates for $u^{K+1} - u^*$. To do this, we first introduce the following lemmas.
\begin{lemma}\label{lemma3}
	Assume that Assumption \ref{ass} holds and $f \in C_b^{4,4}, b \in C_{b}^{4}, \Phi_x \in C_b^{4+\alpha}$ for some $\alpha \in (0, 1)$, $u^k \in \mathcal{K}_N$. Then there exists a constant $C > 0$ such that
	\begin{equation}
		\sup_{u^k \in \mathcal{K}_N} \big\Vert\nabla J_{u}(u^{k})-\nabla J_{u}^{N}(u^{k})\big\Vert_2^{2} \leq \frac{C}{N^2}.
	\end{equation}
\end{lemma}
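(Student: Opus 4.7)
The aim is to quantify how much the numerical gradient $\nabla J_u^N(u^k)$ produced by the high-order FBSDE scheme \eqref{b1}--\eqref{b3} deviates from the true gradient $\nabla J_u(u^k)$ in the $L^2([0,T])$ sense. Since $u^k\in\mathcal{K}_N$ is piecewise constant and $\nabla J_u^N(u^k)$ is naturally extended piecewise constantly from its values at the partition points, the squared norm reduces to $h\sum_{n=0}^{N-1}\bigl|\nabla J_u(u_{t_n}^k)-\nabla J_u^N(u_{t_n}^k)\bigr|^2$, up to a small correction on each subinterval to account for the off-grid evaluation of $\nabla J_u(u_t^k)$. I would write each pointwise difference at $t_n$ as
\[
\mathbb{E}\bigl[b_u(X_{t_n},u_{t_n}^k)Y_{t_n}-b_u(X_n^N,u_{t_n}^k)Y_n^N\bigr]+\mathbb{E}\bigl[\sigma_u(u_{t_n}^k)(Z_{t_n}-Z_n^N)\bigr]+\mathbb{E}\bigl[r_u(X_{t_n},u_{t_n}^k)-r_u(X_n^N,u_{t_n}^k)\bigr],
\]
and further split the $b_u$-term into $b_u(X_{t_n},\cdot)(Y_{t_n}-Y_n^N)+\bigl(b_u(X_{t_n},\cdot)-b_u(X_n^N,\cdot)\bigr)Y_n^N$.

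Using Cauchy--Schwarz together with the uniform Lipschitz and boundedness conditions on $b_u,\sigma_u,r_u$ from Assumption \ref{ass} and a boundedness estimate for $Y_n^N$ analogous to Lemma \ref{lemma1}, the squared pointwise error is bounded by $C\bigl(\mathbb{E}[|X_{t_n}-X_n^N|^2]+\mathbb{E}[|Y_{t_n}-Y_n^N|^2]+\mathbb{E}[|Z_{t_n}-Z_n^N|^2]\bigr)$. I would then invoke (i) a strong order-one Euler estimate $\mathbb{E}[|X_{t_n}-X_n^N|^2]\leq Ch^2$, which is available here because $\sigma$ does not depend on the state, so the Milstein correction vanishes; and (ii) the convergence lemma stated earlier for the high-order BSDE scheme, which yields $\mathbb{E}[|Y_{t_n}-Y_n^N|^2]\leq Ch^2$ uniformly in $n$ and $h\sum_{i}\mathbb{E}[|Z_{t_i}-Z_i^N|^2]\leq Ch^2$. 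Multiplying the pointwise squared bound by $h$ and summing, the $X$ and $Y$ pieces contribute $Nh\cdot Ch^2=C/N^2$, while the $Z$ piece is precisely $Ch\sum_n\mathbb{E}[|Z_{t_n}-Z_n^N|^2]\leq C/N^2$. All constants depend only on the bounds provided by Assumption \ref{ass}, so they are independent of $u^k$, and the supremum can be taken.

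The main obstacle is the $Z$ contribution: the scheme does not supply a uniform-in-$n$ bound $\mathbb{E}[|Z_{t_n}-Z_n^N|^2]\leq Ch^2$, only an $h$-weighted sum of that order, so a naive pointwise estimate would produce an $O(1/N)$ loss rather than $O(1/N^2)$. The argument must therefore exploit precisely the summation structure provided by the BSDE convergence lemma rather than plug in a termwise bound. A secondary technicality is the off-grid comparison between $\nabla J_u(u_t^k)$ on $[t_n,t_{n+1})$ and $\nabla J_u^N$ at $t_n$: I would absorb it by adding and subtracting $\nabla J_u(u_{t_n}^k)$ and using Lipschitz regularity of the deterministic map $t\mapsto\nabla J_u(u_t^k)$, which follows from the Feynman--Kac representation $Y_t=\eta(t,X_t)$ with $\eta\in C_b^4$ under the strengthened smoothness hypotheses of the lemma; each subinterval then contributes $O(h^3)$ to the squared $L^2$ norm and sums to $O(h^2)$, matching the desired rate.
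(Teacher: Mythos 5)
Your proposal follows essentially the same route as the paper's proof: split the squared $L^2$ error into an off-grid time-regularity part on each subinterval (controlled via the Feynman--Kac representation $Y_t=\eta(t,X_t)$ and the boundedness of $\mathcal{L}^0$ applied to the integrand) and an on-grid discretization part at the partition points (controlled by the forward-scheme error together with the BSDE convergence lemma), then sum with weight $h$. The only differences are minor: the paper bounds the forward contribution through the weak error $|\mathbb{E}[g(X_{t_n})-g(X_n^N)]|\le Ch$ for $g\in C_b^4$ rather than the strong order-one estimate you invoke (both valid here since $\sigma$ is state-independent), and your explicit insistence that the $Z$-contribution be absorbed through the $h$-weighted sum $h\sum_n\mathbb{E}[|Z_{t_n}^{t_n,X_n^N}-Z_n^N|^2]\le Ch^2$ is actually more careful than the paper's write-up, which asserts a pointwise bound $(\mathbb{E}[Z_{t_n}^{t_n,X_n^N}-Z_n^N])^2\le C/N^2$ that the cited lemma does not literally supply.
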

\begin{proof}
	Denote
	\begin{align*}
		\phi_{t}^{k} &:= b_{u}(X_{t}^{k}, 	u_{t}^{k}) Y_{t}^{k}+\sigma_{u}( u_{t}^{k}) Z_{t}^{k}+r_{u}(X_{t}^{k}, u_{t}^{k}),\\
		\phi_{n}^{k} &:= b_{u}(X_{n}^{k,N}, 	u_{t_{n}}^{k}) Y_{n}^{k,N}+\sigma_{u}( u_{t_{n}}^{k}) Z_{n}^{k,N}+r_{u}(X_{n}^{k,N}, u_{t_{n}}^{k}).
	\end{align*}
	For notation simplicity, we shall omit the superscript $k$ in this proof, such as $\phi_{t}^{k}=\phi_{t}$, $\phi_{n}^{k}=\phi_{n}$. Then we have
	\begin{equation}
		\begin{aligned}
			&\int_0^T  (\nabla J_u(u_t) - \nabla J_u^N(u_t))^2 \mathrm{d}t \\
			\leq& 2 \sum_{n=0}^{N-1} \int_{t_n}^{t_{n+1}} \left[ (\nabla J_u(u_t) - \nabla J_u(u_{t_n}))^2 + (\nabla J_u(u_{t_n}) - \nabla J_u^N(t_n, u_{t_n}))^2 \right] \mathrm{d}t \\
			=& 2 \sum_{n=0}^{N-1} \int_{t_n}^{t_{n+1}} (\mathbb{E}[\phi_t - \phi_{t_n}])^2 \mathrm{d}t + 2 \sum_{n=0}^{N-1} \int_{t_n}^{t_{n+1}} (\mathbb{E}[\phi_{t_n} - \phi_n])^2 \mathrm{d}t \\
			=& 2(I_1 + I_2),\\
			\notag 
		\end{aligned}
	\end{equation}
	where
	\begin{equation}
		\begin{aligned}
			I_1 = & \sum_{n=0}^{N-1} \int_{t_n}^{t_{n+1}} (\mathbb{E}[\phi_t - \phi_{t_n}])^2 \mathrm{d}t 
			\leq  \sum_{n=0}^{N-1} \int_{t_n}^{t_{n+1}} \left(  \int_{t_n}^{t} \frac{d}{dr} \mathbb{E}[\phi_r] \Big\Vert_{r=s} \mathrm{d}s \right)^2 \mathrm{d}t\\
			\leq & h \sum_{n=0}^{N-1} \int_{t_n}^{t_{n+1}} \int_{t_n}^{t} \left( \mathbb{E}[\mathcal{L}^0 \bar{\phi} (s,X_s)] \right)^2 \mathrm{d}s \mathrm{d}t \leq \frac{C}{N^2},
		\end{aligned}
	\end{equation}
	with $$\mathcal{L}^0 \bar{\phi}(t, x) = \partial_t  \bar{\phi}(t, x) + b(x, u(t)) \partial_x  \bar{\phi}(t, x) + \frac{1}{2} \sigma(u(t))^2 \partial_{xx}  \bar{\phi}(t, x),$$
		\begin{align*}
			I_2 = & \sum_{n=0}^{N-1} \int_{t_n}^{t_{n+1}} \Big(\mathbb{E}[\varphi_{t_n} - \varphi_n] \Big)^2 \mathrm{d}t \\
			=& \sum_{n=0}^{N-1} \int_{t_n}^{t_{n+1}} \Big(\mathbb{E}[b'_u(X_{t_n}, u_{t_n})Y_{t_n} + \sigma'_u(u_{t_n})Z_{t_n} + r_u(u_{t_n}, X_{t_n})    \\
			& - b'_u(X^N_n, u_{t_n})Y^N_n - \sigma'_u(u_{t_n})Z^N_n - r_u(u_{t_n}, X^N_n)] \Big)^2 \mathrm{d}t \\
			\leq & \sum_{n=0}^{N-1} \int_{t_n}^{t_{n+1}} \Big(\Big(\mathbb{E}[b'_u(X_{t_n}, u_{t_n})Y_{t_n} - b'_u(X^N_n, u_{t_n})Y_{t_n}^{t_n, X_n^N}]\Big)^2 \\
			& + \Big(\mathbb{E}[b'_u(X_{n}^N, u_{t_n})Y_{t_n}^{t_n, X_n^N} - b'_u(X^N_n, u_{t_n})Y^N_n]\Big)^2 \\			
			& + \Big(\mathbb{E}[\sigma'_u(u_{t_n})Z_{t_n} - \sigma'_u(u_{t_n})Z_{t_n}^{t_n, X^N_n}]\Big)^2 + \Big(\mathbb{E}[\sigma'_u(u_{t_n})Z_{t_n}^{t_n, X^N_n} - \sigma'_u(u_{t_n})Z^N_n]\Big)^2\\
			&+ \Big(\mathbb{E}[r_u(u_{t_n}, X_{t_n}) - r_u(u_{t_n}, X^N_n)]\Big)^2\Big) \mathrm{d} t.
			\end{align*}
	As $ |\mathbb{E}[ g(X_{t_n}) -g( X^N_n)]| \leq C h$ for any $g \in C_b^4$, then we have 
	where
	\begin{equation}
		\begin{aligned}
			&\Big(\mathbb{E}[b'_u(X_{t_n}, u_{t_n})Y_{t_n} - b'_u(X^N_n, u_{t_n})Y_{t_n}^{t_n, X_n^N}\Big)^2 \\
			= & \Big(\mathbb{E}[b'_u(X_{t_n}, u_{t_n}) \eta(t_n, X_{t_n}) - b'_u(X^N_n, u_{t_n}) \eta (t_n, X_n^N)\Big)^2
			\leq  \frac{C}{N^2},
			\notag
		\end{aligned}
	\end{equation}
	\begin{equation}
		\begin{aligned}
			&\Big(\mathbb{E}[b'_u(X_{n}^N, u_{t_n})Y_{t_n}^{t_n, X_n^N} - b'_u(X^N_n, u_{t_n})Y^N_n\Big)^2 \\
			\leq & \mathbb{E}[b'_u(X_{n}^N, u_{t_n})^2] \mathbb{E}\Big[(Y_{t_n}^{t_n, X_n^N} - Y^N_n)^2\Big] \leq \frac{C}{N^2},
			\notag
		\end{aligned}
	\end{equation}
	and similarly
	\begin{equation}
		\begin{aligned}
			\Big(\mathbb{E}[\sigma'_u(u_{t_n})Z_{t_n} - \sigma'_u(u_{t_n})Z_{t_n}^{t_n, X^N_n}]\Big)^2&\leq \frac{C}{N^2},\\
			\Big(\mathbb{E}[\sigma'_u(u_{t_n})Z_{t_n}^{t_n, X^N_n} - \sigma'_u(u_{t_n})Z^N_n]\Big)^2&\leq \frac{C}{N^2},\\
			\Big(\mathbb{E}[r_u(u_{t_n}, X_{t_n}) - r_u(u_{t_n}, X^N_n)]\Big)^2\Big)&\leq \frac{C}{N^2}.
			\notag
		\end{aligned}
	\end{equation}
	Then, the desired result follows by combining the estimates above.
\end{proof}
Clearly, $\nabla J_u^N (\cdot)$ depends on the numerical scheme \eqref{1014}. As established in the preceding lemma, this dependence induces an error bound between the numerical approximation $\nabla J_u^N (\cdot)$ and the exact derivative $\nabla J_u (\cdot)$. To analyze the convergence property of the iteration scheme \eqref{1016}, we must quantify the discrepancy between the exact optimal control $u \in \mathcal{K}$ and the optomal control $u^{*, N}$ found in the subspace $\mathcal{K}_N$.
\begin{lemma}\label{lemma4}
	Assume that $u^*$, $\nabla J_u(\cdot)$ is Lipschitz and $\nabla J_u(\cdot)$ uniformly monotone around $u^*$ and $u^{*,N}$ in the sense that there exist positive constants $\lambda$ and $C$ such that
	\begin{equation}
	\begin{aligned}
		\|\nabla J_u(u^*) - \nabla J_u(v)\|_2 &\leqslant C\|u^* - v\|_2, \quad \forall v \in \mathcal{K}, \\
		(\nabla J_u(u^*) - \nabla J_u(v), u^* - v) &\geqslant \lambda\|u^* - v\|_2^2, \quad \forall v \in \mathcal{K}, \\
		\|\nabla J_u(u^{*,N}) - \nabla J_u(v)\|_2 &\leqslant C\|u^{*,N} - v\|_2, \quad \forall v \in {\mathcal{K}}_N, \\
		(\nabla J_u(u^{*,N}) - \nabla J_u(v), u^{*,N} - v) &\geqslant \lambda\|u^{*,N} - v\|_2^2, \quad \forall v \in {\mathcal{K}}_N,
	\end{aligned}
	\end{equation}
 	then the following inequality holds:
 	\begin{equation}
 		\|u^* - u^{*,N}\|_2^2 \leq \frac{C}{N^2},
 	\end{equation}
 	futher
		\begin{equation}
			\|\nabla J_u(u^{*,N}) - \nabla J_u(u^*)\|_2^2 \leq \frac{C}{N^2}.
		\end{equation}
\end{lemma}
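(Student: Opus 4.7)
The plan is to combine the strong monotonicity and Lipschitz hypotheses on $\nabla J_u$ with the two variational optimality conditions at $u^*$ and $u^{*,N}$, and to quantify the subspace gap by a piecewise-constant interpolant of $u^*$ in $\mathcal{K}_N$. Starting from monotonicity at $u^*$,
\[
\lambda \|u^* - u^{*,N}\|_2^2 \le \bigl(\nabla J_u(u^*) - \nabla J_u(u^{*,N}),\, u^* - u^{*,N}\bigr),
\]
I would exploit the two first-order inequalities $\bigl(\nabla J_u(u^*),\, u^{*,N}-u^*\bigr) \ge 0$ (valid because $u^{*,N} \in \mathcal{K}_N \subset \mathcal{K}$) and $\bigl(\nabla J_u(u^{*,N}),\, \tilde{u}^* - u^{*,N}\bigr) \ge 0$ (valid for any $\tilde{u}^* \in \mathcal{K}_N$). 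Splitting $u^{*,N} - u^* = (u^{*,N} - \tilde{u}^*) + (\tilde{u}^* - u^*)$ inside the monotonicity bound and subtracting the two nonnegative optimality terms collapses everything except a cross term, yielding
\[
\lambda \|u^* - u^{*,N}\|_2^2 \le \bigl(\nabla J_u(u^{*,N}) - \nabla J_u(u^*),\, \tilde{u}^* - u^*\bigr) + \bigl(\nabla J_u(u^*),\, \tilde{u}^* - u^*\bigr).
\]

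For the interpolant I would take $\tilde{u}^*$ to be the $L^2$-projection of $u^*$ onto the piecewise-constant space $\mathcal{U}_N[0,T]$, i.e.\ $\tilde{u}^*|_{[t_n,t_{n+1})} = \tfrac{1}{h}\int_{t_n}^{t_{n+1}} u^*_s\,\mathrm{d}s$; convexity and closedness of $\mathcal{C}$ guarantee $\tilde{u}^* \in \mathcal{K}_N$. Under the regularity of $u^*$ and of $\nabla J_u(u^*)$ inherited from Assumption \ref{ass} through the PMP/FBSDE representation (I expect $H^1$-type smoothness in time), the standard projection estimate gives both $\|\tilde{u}^* - u^*\|_2 \le C/N$ and $\|\nabla J_u(u^*) - \Pi_N \nabla J_u(u^*)\|_2 \le C/N$, where $\Pi_N$ denotes the $L^2$-projection onto $\mathcal{U}_N$. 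The first bracket is then controlled by the Lipschitz hypothesis and Young's inequality, giving $C\|u^* - u^{*,N}\|_2\,\|\tilde{u}^* - u^*\|_2 \le \tfrac{\lambda}{2}\|u^* - u^{*,N}\|_2^2 + C/N^2$. The second bracket is handled by orthogonality of the projection: since $\tilde{u}^* - u^*$ is $L^2$-orthogonal to every piecewise-constant function,
\[
\bigl(\nabla J_u(u^*),\, \tilde{u}^* - u^*\bigr) = \bigl(\nabla J_u(u^*) - \Pi_N\nabla J_u(u^*),\, \tilde{u}^* - u^*\bigr) \le C/N^2
\]
by Cauchy--Schwarz. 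Absorbing the $\|u^* - u^{*,N}\|_2^2$ contribution into the left yields the first inequality $\|u^* - u^{*,N}\|_2^2 \le C/N^2$, and the second inequality then follows immediately from the Lipschitz bound $\|\nabla J_u(u^{*,N}) - \nabla J_u(u^*)\|_2 \le C\|u^{*,N} - u^*\|_2$.

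The hardest step will be rigorously justifying the $H^1$-type regularity of $u^*$ and of $\nabla J_u(u^*)$ that drives the two $O(1/N)$ projection estimates; with only $L^2$-control the piecewise-constant approximation error is merely $o(1)$ and the claimed $1/N^2$ rate collapses. In the present stochastic optimal-control setting this regularity should follow from the smoothness hypotheses in Assumption \ref{ass} combined with the PMP characterization of $u^*$ through the FBSDE pair $(X_t, Y_t, Z_t)$, so my plan is either to invoke such a regularity result for the optimal control or to append a mild explicit smoothness hypothesis on $u^*$ to the statement of the lemma.
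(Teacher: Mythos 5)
Your argument is correct, but it follows a genuinely different route from the paper's. The paper proves the bound through the fixed-point characterizations $u^{*}=\mathcal{P}_{\mathcal{K}}(u^{*}-\eta\nabla J_u(u^{*}))$ and $u^{*,N}=\mathcal{P}_{\mathcal{K}_N}(u^{*,N}-\eta\nabla J_u(u^{*,N}))$: choosing $\eta=\lambda/C$ makes the projected-gradient map a contraction with factor $\sqrt{1-2\lambda\eta+C\eta^{2}}<1$, and a triangle inequality then reduces everything to the best-approximation error $\|\omega-\mathcal{P}_{\mathcal{U}_N}\omega\|_2$ of the single function $\omega=u^{*}-\eta\nabla J_u(u^{*})$, bounded by $C/N$ via the Lipschitz-in-time regularity of $u^{*}$ and of $t\mapsto\nabla J_u(u^{*})(t)$ (plus the auxiliary fact that $\mathcal{P}_{\mathcal{K}}$ and $\mathcal{P}_{\mathcal{K}_N}$ agree on $\mathcal{U}_N$). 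You instead run a C\'ea-type variational-inequality argument: strong monotonicity, the two first-order optimality conditions at $u^{*}$ (over $\mathcal{K}\supset\mathcal{K}_N$) and at $u^{*,N}$ (over $\mathcal{K}_N$), and a cell-average interpolant $\tilde{u}^{*}\in\mathcal{K}_N$ (admissible by convexity of $\mathcal{C}$). The crux of your version is the $L^{2}$-orthogonality of $\tilde{u}^{*}-u^{*}$ to $\mathcal{U}_N$, which upgrades the consistency term $(\nabla J_u(u^{*}),\tilde{u}^{*}-u^{*})$ from the naive $O(1/N)$ to $O(1/N^{2})$; without that step your argument would only give $\|u^{*}-u^{*,N}\|_2^{2}\le C/N$, so be sure to keep it. Both routes consume exactly the same regularity input --- an $O(1/N)$ piecewise-constant approximation bound for $u^{*}$ and for $t\mapsto\nabla J_u(u^{*})(t)$ --- and you are in fact more careful than the paper in flagging that the time-regularity of $\nabla J_u(u^{*})$ does not follow from its Lipschitz continuity as a map on $L^{2}$ and must be extracted from the FBSDE representation; the paper asserts it in one line. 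Your approach dispenses with the step size $\eta$ and the contraction constant $C_2$ entirely, at the price of the orthogonality bookkeeping.
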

\begin{proof}
	\begin{align*}
		\|u^* - u^{*,N}\|_2 &= \|u^* - \mathcal{P}_{{\mathcal{K}}_N}(u^* - \eta \nabla J_u(u^*)) + \mathcal{P}_{\mathcal{K}_N}(u^* - \eta \nabla J_u(u^*)) - u^{*,N}\|_2 \\
		& \leq \|u^* - \mathcal{P}_{{\mathcal{K}}_N}(u^* - \eta \nabla J_u(u^*))\|_2+ \|\mathcal{P}_{{\mathcal{K}}_N}(u^* - \eta \nabla J_u(u^*)) - u^{*,N} \|_2,
	\end{align*}
	where
	\begin{equation}
		\begin{aligned}
			& \|\mathcal{P}_{{\mathcal{K}}_N}(u^* - \eta\nabla J_u(u^*)) - u^{*,N} \|_2^2\\
			\leq & \|\mathcal{P}_{{\mathcal{K}}_N}(u^* - \eta \nabla J_u(u^*)) - \mathcal{P}_{{\mathcal{K}}_N}(u^{*,N} - \eta \nabla J_u(u^{*,N}))\|_2^2\\
			\leq & \|\mathcal{P}_{{\mathcal{K}}_N}(u^* - u^{*,N}- \eta \nabla J_u(u^*) + \eta \nabla J_u(u^{*,N}))\| _2^2\\
			\leq & \| u^* - u^{*,N} \|_2^2 - 2 \eta \langle u^{*} - u^{*,N},  \nabla J_{u} (u^{*}) - \nabla J_{u}(u^{*,N}) \rangle \\
			& + \eta^2 \|  \nabla J_{u} (u^{*}) - \nabla J_{u}(u^{*,N}) \|_2^2 \\
			\leq & \| u^* - u^{*,N} \|_2^2 -2\lambda \eta  \| u^* - u^{*,N} \|_2^2 + C \eta^2 \| u^* - u^{*,N} \|_2^2,
			\notag
		\end{aligned}
	\end{equation}
	so
	\begin{align*}
		\|u^* - u^{*,N}\|_2 \leq \|u^* - \mathcal{P}_{{\mathcal{K}}_N}(u^* - \eta \nabla J_u(u^*))\|_2 + \sqrt{1 - 2\lambda\eta + C\eta^2} \|u^* - u^{*,N}\|_2.
	\end{align*}
	Let $\eta = \lambda/C$, $C_2 = \left(1 - \sqrt{1 - 2\lambda\eta + C\eta^2}\right)^{-1}$, we have
	\[
	\|u^* - u^{*,N}\|_2 \leqslant C_2 \|u^* - \mathcal{P}_{\mathcal{K}_N}(u^* - \eta \nabla J_u(u^*))\|_2.
	\]
	Since $\mathcal{C}$ is invariant in time, for $v \in \mathcal{U}_N$, it holds that $\mathcal{P}_\mathcal{K} v \in \mathcal{U}_N$. Thus we have $\mathcal{P}_\mathcal{K} v \in \mathcal{K}_N$, and then we have $\mathcal{P}_\mathcal{K} v = \mathcal{P}_{\mathcal{K}_N} v$. Now, denoting $\omega := u^* - \eta \nabla J_u (u^*)$, we have
	\begin{align*}
		\|u^* - u^{*,N}\|_2 &\leqslant C_2 \|u^* - \mathcal{P}_{{\mathcal{K}}_N}(u^* - \eta \nabla J_u (u^*))\|_2 = C_2 \|\mathcal{P}_\mathcal{K} \omega - \mathcal{P}_{\mathcal{K}_N} \omega\|_2 \\
		&\leqslant C_2 \left( \|\mathcal{P}_\mathcal{K} \omega - \mathcal{P}_\mathcal{K} P_{\mathcal{U}_N} \omega\|_2 + \|\mathcal{P}_\mathcal{K} \mathcal{P}_{\mathcal{U}_N} \omega - \mathcal{P}_{\mathcal{K}_N} \omega\|_2 \right) \\
		&= C_2 \left( \|\mathcal{P}_\mathcal{K} \omega - \mathcal{P}_\mathcal{K} \mathcal{P}_{\mathcal{U}_N} \omega\|_2 + \|\mathcal{P}_{\mathcal{K}_N} \mathcal{P}_{\mathcal{U}_N} \omega - \mathcal{P}_{\mathcal{K}_N} \omega\|_2 \right) \\
		&\leqslant 2C_2 \|\omega - \mathcal{P}_{\mathcal{U}_N} \omega\|_2.
	\end{align*}
	As $u^*$ is Lipschitz, $\nabla J_u(\cdot)$ is Lipschitz around $u^*$, we have $\|\omega - \mathcal{P}_{\mathcal{U}_N}\omega\|_2 \leq \frac{C}{N}$, and thus $\|u^* - u^{*,N}\|_2 \leq \frac{C}{N}$. Then, the conclusion follows:
	\begin{equation}\label{c2}
		\|\nabla J_u(u^{*,N}) - \nabla J_u(u^*)\|_2^2 \leq C\|u^* - u^{*,N}\|_2^2 \leq \frac{C}{N^2}.
	\end{equation}
\end{proof}
With the conclusion of Lemma \ref{lemma4} in hand, we also need to deduce the error between $u^{K+1}$ and the optimal control in the piece-wise constant subset $\mathcal{K}_N$.
\begin{lemma}\label{thm3}
	Assume all the assumptions in Lemma \ref{lemma3} and Lemma \ref{lemma4} are true. Let $\eta_{k} = \frac{\theta}{k+M}$ for some constants $\theta$ and $M$ such that $\lambda\theta - 4C_{L}\theta^{2}/(1 + M)>1$. Also, let $\{u^{k}\}_k$ be the sequence of estimated optimal control obtained by the SGD optimization scheme \eqref{1016}. Then the following inequality holds:
	\begin{equation}\label{c1}
		\mathbb{E} \left[ {\big\Vert u^{K+1} - u^{*,N} \big\Vert}_2^2 \right] 
		\leq C \left( \frac{N}{K} + \frac{1}{N^2} \right).
		\notag 
	\end{equation}
\end{lemma}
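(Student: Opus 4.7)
The plan is to derive a one-step recursion for $a_k := \mathbb{E}\bigl[\|u^k - u^{*,N}\|_2^2\bigr]$ and then close the argument by induction. Starting from the fixed-point identity \eqref{17} (with any positive step size) together with the non-expansiveness of $\mathcal{P}_{\mathcal{K}_N}$, I would write
\begin{equation*}
\|u^{k+1} - u^{*,N}\|_2^2 \leq \|u^k - u^{*,N}\|_2^2 - 2\eta_k \bigl\langle u^k - u^{*,N},\, \nabla j_u^k(u^k) - \nabla J_u(u^{*,N})\bigr\rangle + \eta_k^2 \bigl\|\nabla j_u^k(u^k) - \nabla J_u(u^{*,N})\bigr\|_2^2,
\end{equation*}
and then apply $\mathbb{E}^k[\cdot]$, using the unbiasedness $\mathbb{E}^k[\nabla j_u^k(u^k)] = \nabla J_u^N(u^k)$ established right after Proposition~\ref{pro1}.

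For the cross term I would split $\nabla J_u^N(u^k) - \nabla J_u(u^{*,N}) = [\nabla J_u(u^k) - \nabla J_u(u^{*,N})] + [\nabla J_u^N(u^k) - \nabla J_u(u^k)]$. The first part is controlled by the uniform monotonicity in Lemma~\ref{lemma4}, contributing $\lambda\|u^k - u^{*,N}\|_2^2$; the second is bounded by Cauchy-Schwarz together with Lemma~\ref{lemma3} and then split via Young's inequality into $\tfrac{\lambda}{2}\|u^k - u^{*,N}\|_2^2 + C/(\lambda N^2)$. For the variance term, a threefold triangle inequality separates it into stochastic noise, discretization bias, and a Lipschitz-in-$u$ piece, bounded respectively by Lemma~\ref{lemma2} as $CN$, Lemma~\ref{lemma3} as $C/N^2$, and $C_L\|u^k - u^{*,N}\|_2^2$ from the Lipschitz assumption in Lemma~\ref{lemma4}.

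Putting these ingredients together gives the one-step bound
\begin{equation*}
a_{k+1} \leq \bigl(1 - \lambda\eta_k + 4C_L \eta_k^2\bigr)\,a_k + C\,\eta_k^2 N + \frac{C\,\eta_k}{N^2}.
\end{equation*}
Substituting $\eta_k = \theta/(k+M)$ and invoking the hypothesis $\lambda\theta - 4C_L\theta^2/(1+M) > 1$ turns the prefactor into $1 - c/(k+M)$ with $c > 1$. I would then close the argument by inducting on the ansatz $a_k \leq \alpha/(k+M) + \beta$ with $\alpha \propto N$ and $\beta \propto 1/N^2$. Using the elementary estimate $\alpha/(k+M) - \alpha/(k+1+M) \leq \alpha/(k+M)^2$, the inductive step reduces to the two scalar conditions $(c-1)\alpha \geq C_1 N$ and $c\beta \geq C_2/N^2$, which both hold for the stated choices (with $M$ taken large enough to accommodate the base case). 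Evaluating at $k = K$ yields $a_{K+1} \leq C(N/K + 1/N^2)$ as claimed.

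The main obstacle is keeping the two distinct error scales cleanly separated across all the manipulations: the $\sqrt{N}$-scale stochastic fluctuation $\mathbb{E}[\|\nabla j_u^k - \nabla J_u^N\|_2^2] \leq CN$ from Lemma~\ref{lemma2} is what forces the $N/K$ term (through the quadratic coefficient $\eta_k^2 N$), whereas the $1/N$-scale deterministic bias from Lemma~\ref{lemma3} is what produces the non-vanishing $1/N^2$ residual (through the linear-in-$\eta_k$ cross term). It is essential not to absorb the bias into the fluctuation estimate, or to pay an $\eta_k$ instead of $\eta_k^2$ for the stochastic part, since either mistake would collapse the two-term bound into a strictly worse single-term rate.
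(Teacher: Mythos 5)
Your proposal is correct and follows essentially the same route as the paper's proof: the same fixed-point/non-expansiveness setup, the same splitting of the cross term via unbiasedness, monotonicity, and Young's inequality with $\lambda/2$, and the same threefold decomposition of the variance term using Lemmas \ref{lemma2}--\ref{lemma4}, arriving at the identical one-step recursion. The only (cosmetic) difference is that you close the recursion by induction on the ansatz $a_k \leq \alpha/(k+M) + \beta$, whereas the paper unrolls the product $\prod_{k=m}^{K}(1 - c_l\tilde{\eta}_k) \sim O((K/m)^{-c_l})$ and estimates the resulting sums directly; both yield the same $C(N/K + 1/N^2)$ bound.
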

\begin{proof}
	By equation \eqref{17}, the following holds for any positive $\eta_{k}$
	\begin{equation}\label{1033}
		u^{*,N} = \mathcal{P}_{\mathcal{K}_N} \left(u^{*,N}-\eta_{k}\nabla J_{u}(u^{*,N})\right).
	\end{equation}
	Subtracting \eqref{1033} from both sides of \eqref{1016}, we have
	\begin{equation}
		{\big\Vert u^{k+1} - u^{*,N} \big\Vert}_2^2 = {\big\Vert \mathcal{P}_{\mathcal{K}_N} \left( u^{k} - u^{*,N} -  \left( \eta_{k}\nabla j_{u}^k (u^{k}) - \eta_{k}\nabla J_{u}(u^{*,N}) \right) \right)   \big\Vert}_2^2.
	\end{equation}	
	Taking conditional expectation $\mathbb{E}^{k}[\cdot]$ to the above equation, we obtain
	\begin{equation}\label{35}
		\begin{aligned}
			\mathbb{E}^{k} \left[ {\big\Vert u^{k+1} - u^{*,N} \big\Vert}_2^2 \right]  \leq & {\big\Vert u^{k} - u^{*,N} \big\Vert}_2^2 - 
			2 \eta_k \langle u^{k} - u^{*,N}, \mathbb{E}^{k} \left[ \nabla j_{u}^k (u^{k}) - \nabla J_{u}(u^{*,N}) \right]\rangle  \\
			& + {\eta_k}^2 \mathbb{E}^{k} 
			\left[ {\big\Vert \nabla j_{u}^k (u^{k}) - \nabla J_{u}(u^{*,N}) \big\Vert}_2^2 \right].
		\end{aligned}
	\end{equation}
	From the convexity assumption and Lemma \ref{lemma3}, we deduce, from Young's inequality with $\lambda/2$, and the fact that $u^{k}$ is $\mathcal{G}_{k}$ measurable, that
	\begin{equation}\label{36}
		\begin{aligned}
			& 
			- \langle u^{k} - u^{*,N}, \mathbb{E}^{k} \left[ \nabla j_{u}^k (u^{k}) - \nabla J_{u}(u^{*,N}) \right]\rangle = - \langle u^{k} - u^{*,N}, \nabla J_{u}^N (u^{k}) - \nabla J_{u}(u^{*,N}) \rangle \\
			\leq & - \langle u^{k} - u^{*,N}, \nabla J_{u}^N (u^{k}) - \nabla J_{u}(u^k) \rangle - \langle u^{k} - u^{*,N}, \nabla J_{u}(u^{k}) - \nabla J_{u}(u^{*,N}) \rangle\\
			\leq & \frac{1}{2\lambda} {\big\Vert \nabla J_{u}^N (u^{k}) - \nabla J_{u}(u^{k}) \big\Vert}^2 + \frac{\lambda}{2} {\big\Vert u^{k} - u^{*,N} \big\Vert}_2^2 - \lambda {\big\Vert u^{k} - u^{*,N} \big\Vert}_2^2\\
			\leq & \frac{1}{2\lambda} \frac{C}{N^2} - \frac{\lambda}{2} {\big\Vert u^{k} - u^{*,N} \big\Vert}_2^2.
		\end{aligned}
	\end{equation}
	Moreover, from Lemma \ref{lemma2}, Lemma \ref{lemma3}, and the convexity assumption, we have
	\begin{equation}\label{37}
		\begin{aligned}
			&\mathbb{E}^{k} 
			\left[ {\big\Vert \nabla j_{u}^k (u^{k})  - \nabla J_{u}^N (u^{k}) + \nabla J_{u}^N (u^{k}) - \nabla J_{u}(u^{*,N}) \big\Vert}_2^2 \right]\\
			\leq & 2 \mathbb{E}^{k} 
			\left[ {\big\Vert \nabla J_{u}^N (u^{k}) - \nabla J_{u}(u^{*,N}) \big\Vert}_2^2 \right] + CN\\
			\leq & 4 \left( \mathbb{E}^{k} 
			\left[ {\big\Vert \nabla J_{u}^N (u^{k}) - \nabla J_{u}(u^{k}) \big\Vert}_2^2 \right] + \mathbb{E}^{k} 
			\left[ {\big\Vert \nabla J_{u} (u^{k}) - \nabla J_{u}(u^{*,N}) \big\Vert}_2^2 \right] \right) + CN\\
			\leq & 4 \left( \frac{C}{N^2} + C_L {\big\Vert u^{k} - u^{*,N} \big\Vert}_2^2 \right) + CN.
		\end{aligned}
	\end{equation}
	Inserting \eqref{36}--\eqref{37} in \eqref{35}, we obtain
	\begin{equation}
		\begin{aligned}
			\mathbb{E}^{k} \left[ {\big\Vert u^{k+1} - u^{*,N} \big\Vert}_2^2 \right]  \leq & {\big\Vert u^{k} - u^{*,N} \big\Vert}_2^2 - 2 \eta_k \langle u^{k} - u^{*,N}, \mathbb{E}^{k} \left[ \nabla j_{u}^k (u^{k}) - \nabla J_{u}(u^{*,N}) \right]\rangle \\
			& + {\eta_k}^2 \mathbb{E}^{k} 
			\left[ {\big\Vert \nabla j_{u}^k (u^{k}) - \nabla J_{u}(u^{*,N}) \big\Vert}_2^2 \right]\\
			\leq & {\big\Vert u^{k} - u^{*,N} \big\Vert}_2^2 + \frac{\eta_{k}}{\lambda} \frac{C}{N^2} - \lambda \eta_{k} {\big\Vert u^{k} - u^{*,N} \big\Vert}_2^2 \\
			& + 4 \eta_{k}^2 \left( \frac{C}{N^2} + C_L {\big\Vert u^{k} - u^{*, N} \big\Vert}_2^2 \right) + \eta_{k}^2 CN\\
			= & (1 - c_k \eta_{k}) {\big\Vert u^{k} - u^{*,N} \big\Vert}_2^2 + \eta_{k}^2 CN + \left( \frac{\eta_k}{
			\lambda} + 4 \eta_{k}^2 \right) \frac{C}{N^2},
		\end{aligned}
	\end{equation}
	where $c_{k}:= \lambda - 4C_{L}\eta_{k}$.
	Let $\tilde{\eta}_k=\frac{1}{k + M}$. We can find $\theta$ and $M$ such that
	\[
	c_l:=\lambda\theta - 4C_{L}\frac{\theta^{2}}{1 + M}>1,
	\]
	and we have that, when $k$ is large enough, $c_l\tilde{\eta}_{k} \leq c_{k}\eta_{k}$ for $\eta_{k}=\frac{\theta}{k+M}$. 
	\begin{equation}
		\begin{aligned}
			\mathbb{E}^{k} \left[ {\big\Vert u^{k+1} - u^{*,N} \big\Vert}_2^2 \right] \leq & (1 - c_l \tilde{\eta}_k) {\big\Vert u^{k} - u^{*,N} \big\Vert}_2^2 + C \left( \tilde{\eta}_k^2 N + \frac{\tilde{\eta}_k}{N^2} \right).
		\end{aligned}
	\end{equation}
	Next, we take expectation $\mathbb{E}[\cdot]$ to both sides of the above estimate and apply it recursively from $k = 0$ to $k = K$ to get
		\begin{align*}
			\mathbb{E} \left[ {\big\Vert u^{K+1} - u^{*} \big\Vert}_2^2 \right] &\leq\prod_{k = 0}^{K} (1-c_l \tilde{\eta}_k)  \mathbb{E}\left[\big\Vert u^{0}-u^* \big\Vert_2^{2}\right] + \left(\sum_{m = 1}^{K} \tilde{\eta}_{m-1} \prod_{k = m}^{K}(1-c_l \tilde{\eta}_k)\right)\frac{C}{N^2}\\
			& \quad +\frac{C\tilde{\eta}_{K}}{N^2} + \left(\sum_{m = 1}^{K} \tilde{\eta}_{m-1}^2 \prod_{k = m}^{K}(1-c_l \tilde{\eta}_k)\right)CN + C \tilde{\eta}_{K}^2 N\\
			&\leq(K + M)^{-c_l} {\big\Vert u^{0} - u^{*} \big\Vert}_2^2\\
			& \quad +C N \left((K + M)^{-1} - \frac{(1 + M)^{c_l-1}}{(K+M)^{c_l}}\right)+\frac{C}{N^2}.
		\end{align*}
	Since $c_l>1$ and $\prod_{k = m}^{K}(1 - c_l\tilde{\eta}_k)\sim O((K/m)^{-c_l})$, the above estimate gives us
	\begin{equation}
	\mathbb{E} \left[ {\big\Vert u^{K+1} - u^{*,N} \big\Vert}_2^2 \right] \leq C\left(\frac{N}{K}+\frac{1}{N^2}\right).
	\end{equation}	
\end{proof}
Now we are ready to prove the main convergence result of the iteration scheme \eqref{1016} under the convexity assumption, i.e., the convergence between $u^{K+1}$ and the exact optimal control $u^* \in \mathcal{K}$.
\begin{theorem}
	Assume that all the assumptions hold in Lemma \ref{thm3}, and assume the optimal control $u^*$ is bounded. Then we have the following convergence result:
	\begin{equation}
		\mathbb{E}\left[\|u^{K+1} - u^*\|_2^2\right] \leq C \left(\frac{N}{K} + \frac{1}{N^2}\right).
	\end{equation}
\end{theorem}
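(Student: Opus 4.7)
The plan is to reduce the theorem to the two convergence results that have already been established, namely Lemma \ref{thm3} which controls $\|u^{K+1} - u^{*,N}\|_2$ and Lemma \ref{lemma4} which controls $\|u^{*,N} - u^*\|_2$. The exact optimal control $u^*$ lives in $\mathcal{K}$, whereas the iterates $u^{K+1}$ live in the piecewise-constant subspace $\mathcal{K}_N$, and $u^{*,N}$ is the natural intermediate object that lies in both worlds. This suggests splitting the error along $u^{*,N}$.

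First, I would invoke the elementary triangle-type inequality
\begin{equation*}
\|u^{K+1} - u^*\|_2^2 \le 2\|u^{K+1} - u^{*,N}\|_2^2 + 2\|u^{*,N} - u^*\|_2^2,
\end{equation*}
and take expectation on both sides. The first term on the right is a random quantity (because $u^{K+1}$ depends on the SGD samples), while the second term is deterministic, since both $u^*$ and $u^{*,N}$ are defined as minimizers and involve no sampling.

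Next, I would bound the first term by Lemma \ref{thm3}, which gives
\begin{equation*}
\mathbb{E}\!\left[\|u^{K+1} - u^{*,N}\|_2^2\right] \le C\!\left(\frac{N}{K} + \frac{1}{N^2}\right),
\end{equation*}
and bound the deterministic second term by Lemma \ref{lemma4}, which yields $\|u^{*,N} - u^*\|_2^2 \le C/N^2$. Adding the two bounds absorbs the $1/N^2$ contribution of Lemma \ref{lemma4} into the $1/N^2$ term already present in the estimate from Lemma \ref{thm3}, leaving the claimed rate $C(N/K + 1/N^2)$. The boundedness of $u^*$ is what makes Lemma \ref{lemma4} applicable (it underlies the Lipschitz bound $\|\omega - \mathcal{P}_{\mathcal{U}_N}\omega\|_2 \le C/N$ used in its proof), so no new analytical work is required beyond citing the two lemmas.

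There is no real obstacle here — the theorem is essentially a packaging result that stitches together the iteration error (Lemma \ref{thm3}) and the discretization error of the admissible set (Lemma \ref{lemma4}). The only thing to be a little careful about is that the final statement writes $u^*$ where Lemma \ref{thm3} writes $u^{*,N}$; this is consistent with the triangle-inequality step above, and one should simply verify that the constants $C$ produced by the two lemmas can be combined into a single generic constant, which is immediate by the convention fixed in Section 2 that $C$ is a generic constant independent of $k$, $N$, and $u$.
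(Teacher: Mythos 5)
Your proposal is correct and follows essentially the same route as the paper: the paper's own proof splits $\|u^{K+1}-u^*\|_2^2$ along the intermediate point $u^{*,N}$ via the inequality $\|a+b\|_2^2\le 2\|a\|_2^2+2\|b\|_2^2$, then cites Lemma \ref{thm3} for the iteration error and the estimate \eqref{c2} from Lemma \ref{lemma4} for the discretization error, exactly as you do. No gaps; your additional remarks on where the boundedness of $u^*$ enters and on combining generic constants are consistent with the paper's conventions.
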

\begin{proof}
	From the Lemma \ref{thm3} and the fact that 
	\eqref{c2}, we have
	\begin{align*}
		\mathbb{E}\left[\|u^{K+1} - u^*\|_2^2\right] &\leq 2 \mathbb{E}\left[\|u^{K+1} - u^{*,N}\|_2^2\right] + 2 \mathbb{E}\left[\|u^{*,N} - u^*\|_2^2\right] \\
		&\leq C \left(\frac{N}{K} + \frac{1}{N^2}\right) + \frac{C}{N^2}
	\end{align*}
	as desired.
\end{proof}


%
\begin{remark}
	The error estimate reveals the interplay among three key factors:(i) the 	iteration count $K$ in SGD, (ii) the depth of the corresponding Neural SDE, 	and (iii) the discretization error of approximating the FBSDE. Specifically, by choosing
	$K = cN^{3}$, where $c$ is a constant, the numerical scheme \eqref{1014} achieves first-order convergence $(O(\frac{1}{N}))$.
\end{remark}

\section{Numerical examples}
In this section, we consider several numerical examples to illustrate the performance of our high-order backpropagation algorithm for Neural SDE. 
\begin{example}\label{ex1}
	Our first example is from \cite{archibald2024numerical}. The optimal control problem is stated as 
	\begin{equation}
		J(u^*) = \min_{u \in \mathcal{K}} J(u),
		\notag
	\end{equation}
	with the cost function
	\begin{equation}
		J(u) = \frac{1}{2} \int_0^1 \mathbb{E} \left[ |X_t - X_t^*|^2 \right] \mathrm{d}t + \frac{1}{2} \int_0^1 |u_t|^2 \mathrm{d}t + \frac{1}{2} |X_T|^2,
		\notag
	\end{equation}
	and the controlled state process 
	\begin{equation}
		dX_t = (u_t - a_t)\mathrm{d}t + \sigma u_t \mathrm{d}W_t,
		\notag
	\end{equation}
	where the vector function 
	$
	a_t = \left[ \dfrac{-t^2}{2\beta_t}, \dfrac{-\sin t}{\beta_t} \right]^\top,
	$
	$\beta_t = (1 + \sigma^2) + \sigma^2(1-t)$, and $\sigma$ is a constant.
	The deterministic function $X_t^*$ is given by
	\[
	X_t^* := \left[
		t + \alpha_t \frac{0.5 - X_T^{1}}{\sigma^2}, \cos t + \alpha_t \frac{\sin 1 - X_T^{2}}{\sigma^2}
	\right]^\top,
	\]
	where $\alpha_t = \ln \dfrac{1 + 2\sigma^2}{\sigma^2(2-t) + 1}$. For $D := \dfrac{\ln(1 + \frac{\sigma^2}{1 + \sigma^2})}{\sigma^2 + \ln(1 + \frac{\sigma^2}{1 + \sigma^2})}$, $X_T = \left[ X_T^{1}, X_T^{2}\right]^\top$ is defined as $X_T := [D/2, D \cdot \sin 1]^\top$. And the corresponding exact optimal control is
	\[
	u_t^* := \left[
		\frac{-t^2/2 + T^2/2 - X_T^{1}}{\beta_t}, \frac{-\sin t + \sin 1 - X_T^{2}}{\beta_t}
	\right]^\top.
	\]
	\begin{figure}[htbp] 
		\centering
		\includegraphics[width=1\textwidth]{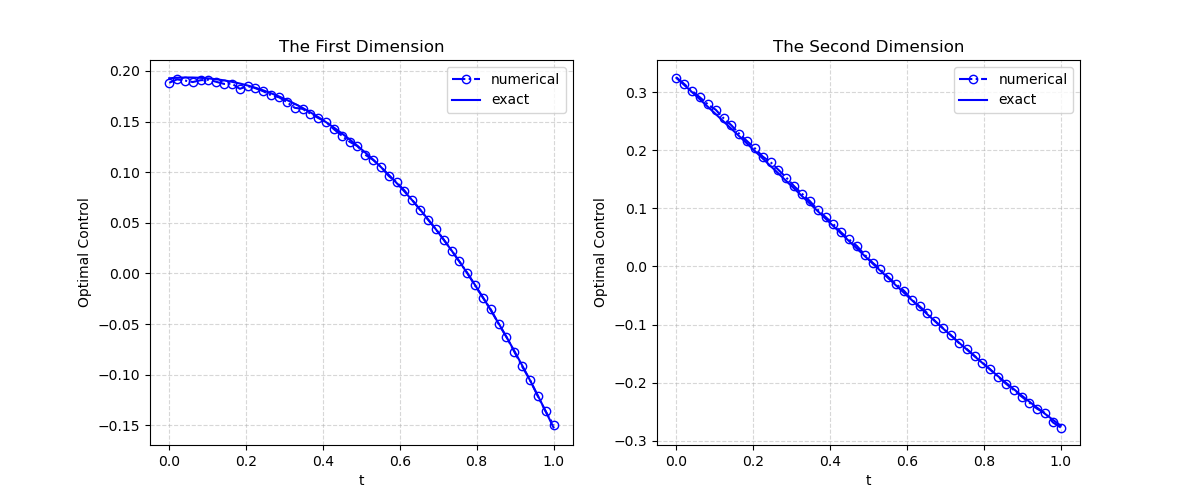}
		\caption{Exact solution and numerical solution.}\label{figure1}
	\end{figure}
	\begin{figure}[htbp] 
		\centering
		\includegraphics[width=0.6\textwidth]{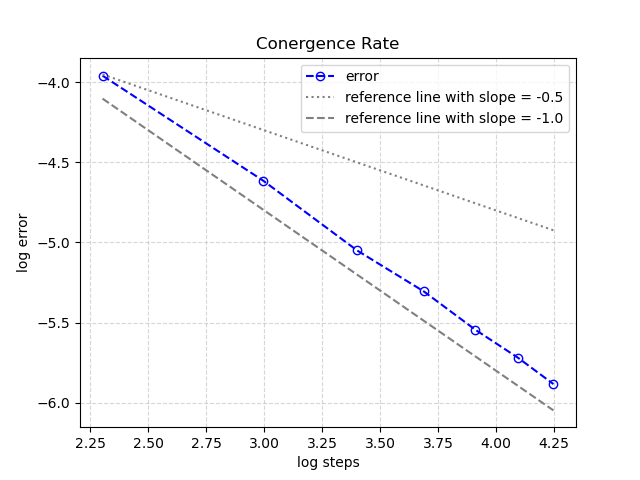}
		\caption{Convergence with respect to $N$.}\label{figure11}
	\end{figure}

	We set $X_0 = 0$, $T = 1$, $\sigma = 0.5$, iteration steps $K = 0.2 \times N^3$ for each $N$. The Figure \ref{figure1} shows that the numerical solutions matches the exact solutions very well when N = 50. In the Figure \ref{figure11}, the depth of neural networks is chosen as $N = 20, 30, 40, \ldots, 70$, 
	we solve the above SOCP 30 times, and it gives the root mean square errors (RMSEs) plotted against $N$ (presented by $\log N$ on the $x$-axis). As can be seen from Figure \ref{figure1} and \ref{figure11}, the convergence order of our high-order algorithm can reach 1.
\end{example}
\begin{example}
	\begin{figure}[htbp] 
		\centering
		\includegraphics[width=1\textwidth]{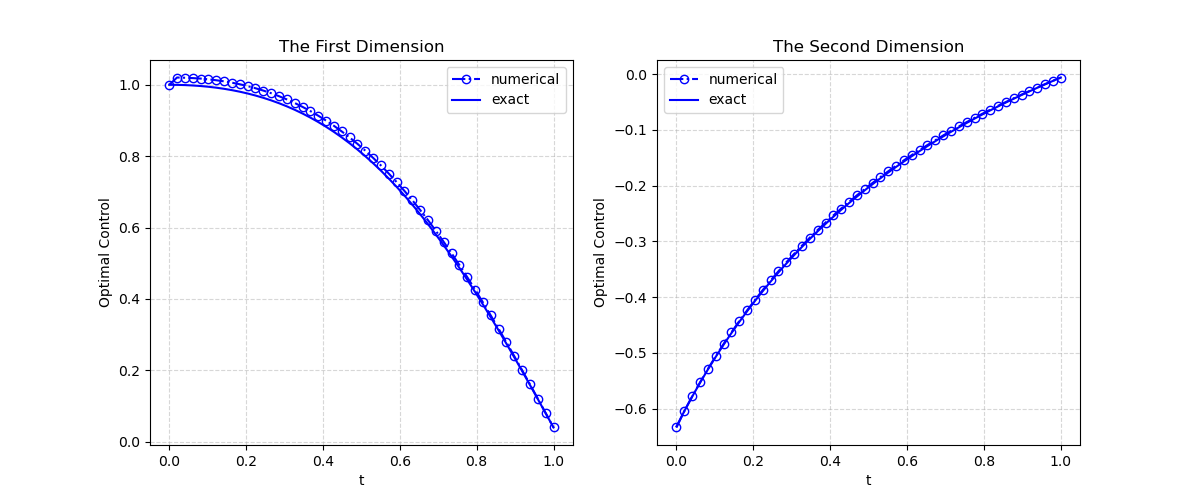}
		\caption{Exact solution and numerical solution.}\label{figure21}
	\end{figure}
	\begin{figure}[htbp] 
		\centering
		\includegraphics[width=0.6\textwidth]{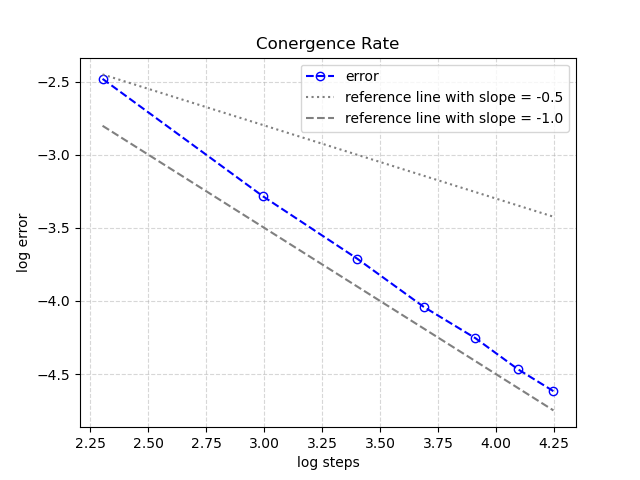}
		\caption{Convergence with respect to $N$.}\label{figure2}
	\end{figure}
	The second example has been used in \cite{IJNAM-10-4}, which is the Black-Scholes type of optimal control problems:
	\[ \min_{u \in \mathcal{K}} J(u) = \frac{1}{2} \int_0^T \mathbb{E} \left[ (X_t - X^*_t)^2 \right] dt + \frac{1}{2} \int_0^T |u_t|^2 dt, \]
	with the controlled state equation
	\[ dX_t = u(t)X_t \, dt + \sigma X_t \, dW_t. \]
	Here $\sigma$ is a constant. The deterministic function $X_t^*$ and the corresponding exact solution $u_t^*$ are given by
	\[
	X_t^* := \left[
	\frac{e^{\sigma^2 t} - (T - t)^2}{\frac{1}{x_0} - Tt + \frac{t^2}{2}} + 1,\quad \frac{e^{\sigma^2 t} - (e^{-T} - e^{-t})^2}{\frac{1}{x_0} + 1 - e^{-t} - t e^{-T}} - e^{-t}
	\right]^\top,
	\]
	\[
	u_t^* := \left[
	\frac{T - t}{\frac{1}{x_0} - Tt + \frac{t^2}{2}},\quad \frac{e^{-T} - e^{-t}}{\frac{1}{x_0} + 1 - e^{-t} - t e^{-T}}
	\right]^\top.
	\]
	We set $x_0 = 1$, $T = 1$ and $\sigma = 0.1$. The same training settings for the Neural SDE are used. Numerical results by our high-order backpropagation algorithm are presented in Figure \ref{figure21} and Figure \ref{figure2}. Similar conclusions can be made as for Example \ref{ex1}. The method converges with the first order accuracy.
\end{example}
\bibliography{sn-bibliography}


\begin{thebibliography}{29}
\ifx \bisbn   \undefined \def \bisbn  #1{ISBN #1}\fi
\ifx \binits  \undefined \def \binits#1{#1}\fi
\ifx \bauthor  \undefined \def \bauthor#1{#1}\fi
\ifx \batitle  \undefined \def \batitle#1{#1}\fi
\ifx \bjtitle  \undefined \def \bjtitle#1{#1}\fi
\ifx \bvolume  \undefined \def \bvolume#1{\textbf{#1}}\fi
\ifx \byear  \undefined \def \byear#1{#1}\fi
\ifx \bissue  \undefined \def \bissue#1{#1}\fi
\ifx \bfpage  \undefined \def \bfpage#1{#1}\fi
\ifx \blpage  \undefined \def \blpage #1{#1}\fi
\ifx \burl  \undefined \def \burl#1{\textsf{#1}}\fi
\ifx \doiurl  \undefined \def \doiurl#1{\url{https://doi.org/#1}}\fi
\ifx \betal  \undefined \def \betal{\textit{et al.}}\fi
\ifx \binstitute  \undefined \def \binstitute#1{#1}\fi
\ifx \binstitutionaled  \undefined \def \binstitutionaled#1{#1}\fi
\ifx \bctitle  \undefined \def \bctitle#1{#1}\fi
\ifx \beditor  \undefined \def \beditor#1{#1}\fi
\ifx \bpublisher  \undefined \def \bpublisher#1{#1}\fi
\ifx \bbtitle  \undefined \def \bbtitle#1{#1}\fi
\ifx \bedition  \undefined \def \bedition#1{#1}\fi
\ifx \bseriesno  \undefined \def \bseriesno#1{#1}\fi
\ifx \blocation  \undefined \def \blocation#1{#1}\fi
\ifx \bsertitle  \undefined \def \bsertitle#1{#1}\fi
\ifx \bsnm \undefined \def \bsnm#1{#1}\fi
\ifx \bsuffix \undefined \def \bsuffix#1{#1}\fi
\ifx \bparticle \undefined \def \bparticle#1{#1}\fi
\ifx \barticle \undefined \def \barticle#1{#1}\fi
\bibcommenthead
\ifx \bconfdate \undefined \def \bconfdate #1{#1}\fi
\ifx \botherref \undefined \def \botherref #1{#1}\fi
\ifx \url \undefined \def \url#1{\textsf{#1}}\fi
\ifx \bchapter \undefined \def \bchapter#1{#1}\fi
\ifx \bbook \undefined \def \bbook#1{#1}\fi
\ifx \bcomment \undefined \def \bcomment#1{#1}\fi
\ifx \oauthor \undefined \def \oauthor#1{#1}\fi
\ifx \citeauthoryear \undefined \def \citeauthoryear#1{#1}\fi
\ifx \endbibitem  \undefined \def \endbibitem {}\fi
\ifx \bconflocation  \undefined \def \bconflocation#1{#1}\fi
\ifx \arxivurl  \undefined \def \arxivurl#1{\textsf{#1}}\fi
\csname PreBibitemsHook\endcsname

\bibitem[\protect\citeauthoryear{Jia and Benson}{2019}]{jia2019neural}
\begin{botherref}
\oauthor{\bsnm{Jia}, \binits{J.}},
\oauthor{\bsnm{Benson}, \binits{A.R.}}:
Neural jump stochastic differential equations.
Advances in Neural Information Processing Systems
\textbf{32}
(2019)
\end{botherref}
\endbibitem

\bibitem[\protect\citeauthoryear{Kidger et~al.}{2021}]{kidger2021efficient}
\begin{barticle}
\bauthor{\bsnm{Kidger}, \binits{P.}},
\bauthor{\bsnm{Foster}, \binits{J.}},
\bauthor{\bsnm{Li}, \binits{X.C.}},
\bauthor{\bsnm{Lyons}, \binits{T.}}:
\batitle{Efficient and accurate gradients for neural sdes}.
\bjtitle{Advances in Neural Information Processing Systems}
\bvolume{34},
\bfpage{18747}--\blpage{18761}
(\byear{2021})
\end{barticle}
\endbibitem

\bibitem[\protect\citeauthoryear{Kong et~al.}{2020}]{kong2020sde}
\begin{botherref}
\oauthor{\bsnm{Kong}, \binits{L.}},
\oauthor{\bsnm{Sun}, \binits{J.}},
\oauthor{\bsnm{Zhang}, \binits{C.}}:
Sde-net: Equipping deep neural networks with uncertainty estimates.
arXiv preprint arXiv:2008.10546
(2020)
\end{botherref}
\endbibitem

\bibitem[\protect\citeauthoryear{Liu et~al.}{2020}]{liu2020does}
\begin{bchapter}
\bauthor{\bsnm{Liu}, \binits{X.}},
\bauthor{\bsnm{Xiao}, \binits{T.}},
\bauthor{\bsnm{Si}, \binits{S.}},
\bauthor{\bsnm{Cao}, \binits{Q.}},
\bauthor{\bsnm{Kumar}, \binits{S.}},
\bauthor{\bsnm{Hsieh}, \binits{C.-J.}}:
\bctitle{How does noise help robustness? explanation and exploration under the
  neural sde framework}.
In: \bbtitle{Proceedings of the IEEE/CVF Conference on Computer Vision and
  Pattern Recognition},
pp. \bfpage{282}--\blpage{290}
(\byear{2020})
\end{bchapter}
\endbibitem

\bibitem[\protect\citeauthoryear{Guo et~al.}{2017}]{guo2017calibration}
\begin{bchapter}
\bauthor{\bsnm{Guo}, \binits{C.}},
\bauthor{\bsnm{Pleiss}, \binits{G.}},
\bauthor{\bsnm{Sun}, \binits{Y.}},
\bauthor{\bsnm{Weinberger}, \binits{K.Q.}}:
\bctitle{On calibration of modern neural networks}.
In: \bbtitle{International Conference on Machine Learning},
pp. \bfpage{1321}--\blpage{1330}
(\byear{2017}).
\bcomment{PMLR}
\end{bchapter}
\endbibitem

\bibitem[\protect\citeauthoryear{Kwon et~al.}{2020}]{kwon2020uncertainty}
\begin{barticle}
\bauthor{\bsnm{Kwon}, \binits{Y.}},
\bauthor{\bsnm{Won}, \binits{J.-H.}},
\bauthor{\bsnm{Kim}, \binits{B.J.}},
\bauthor{\bsnm{Paik}, \binits{M.C.}}:
\batitle{Uncertainty quantification using bayesian neural networks in
  classification: Application to biomedical image segmentation}.
\bjtitle{Computational Statistics \& Data Analysis}
\bvolume{142},
\bfpage{106816}
(\byear{2020})
\end{barticle}
\endbibitem

\bibitem[\protect\citeauthoryear{Eaton-Rosen et~al.}{2018}]{eaton2018towards}
\begin{bchapter}
\bauthor{\bsnm{Eaton-Rosen}, \binits{Z.}},
\bauthor{\bsnm{Bragman}, \binits{F.}},
\bauthor{\bsnm{Bisdas}, \binits{S.}},
\bauthor{\bsnm{Ourselin}, \binits{S.}},
\bauthor{\bsnm{Cardoso}, \binits{M.J.}}:
\bctitle{Towards safe deep learning: accurately quantifying biomarker
  uncertainty in neural network predictions}.
In: \bbtitle{Medical Image Computing and Computer Assisted Intervention--MICCAI
  2018: 21st International Conference, Granada, Spain, September 16-20, 2018,
  Proceedings, Part I},
pp. \bfpage{691}--\blpage{699}
(\byear{2018}).
\bcomment{Springer}
\end{bchapter}
\endbibitem

\bibitem[\protect\citeauthoryear{Lakshminarayanan
  et~al.}{2017}]{lakshminarayanan2017simple}
\begin{botherref}
\oauthor{\bsnm{Lakshminarayanan}, \binits{B.}},
\oauthor{\bsnm{Pritzel}, \binits{A.}},
\oauthor{\bsnm{Blundell}, \binits{C.}}:
Simple and scalable predictive uncertainty estimation using deep ensembles.
Advances in neural information processing systems
\textbf{30}
(2017)
\end{botherref}
\endbibitem

\bibitem[\protect\citeauthoryear{Gal and Ghahramani}{2016}]{gal2016dropout}
\begin{bchapter}
\bauthor{\bsnm{Gal}, \binits{Y.}},
\bauthor{\bsnm{Ghahramani}, \binits{Z.}}:
\bctitle{Dropout as a bayesian approximation: Representing model uncertainty in
  deep learning}.
In: \bbtitle{International Conference on Machine Learning},
pp. \bfpage{1050}--\blpage{1059}
(\byear{2016}).
\bcomment{PMLR}
\end{bchapter}
\endbibitem

\bibitem[\protect\citeauthoryear{Chen et~al.}{2018}]{chen2018neural}
\begin{botherref}
\oauthor{\bsnm{Chen}, \binits{R.T.}},
\oauthor{\bsnm{Rubanova}, \binits{Y.}},
\oauthor{\bsnm{Bettencourt}, \binits{J.}},
\oauthor{\bsnm{Duvenaud}, \binits{D.K.}}:
Neural ordinary differential equations.
Advances in neural information processing systems
\textbf{31}
(2018)
\end{botherref}
\endbibitem

\bibitem[\protect\citeauthoryear{Li et~al.}{2020}]{li2020scalable}
\begin{bchapter}
\bauthor{\bsnm{Li}, \binits{X.}},
\bauthor{\bsnm{Wong}, \binits{T.-K.L.}},
\bauthor{\bsnm{Chen}, \binits{R.T.}},
\bauthor{\bsnm{Duvenaud}, \binits{D.}}:
\bctitle{Scalable gradients for stochastic differential equations}.
In: \bbtitle{International Conference on Artificial Intelligence and
  Statistics},
pp. \bfpage{3870}--\blpage{3882}
(\byear{2020}).
\bcomment{Proceedings of Machine Learning Research}
\end{bchapter}
\endbibitem

\bibitem[\protect\citeauthoryear{Archibald}{2020}]{archibald2020stochastic}
\begin{botherref}
\oauthor{\bsnm{Archibald}, \binits{R.}}:
A stochastic gradient descent approach for stochastic optimal control.
East Asian Journal on Applied Mathematics
\textbf{10}(4)
(2020)
\end{botherref}
\endbibitem

\bibitem[\protect\citeauthoryear{Archibald et~al.}{2022}]{MR4470545}
\begin{barticle}
\bauthor{\bsnm{Archibald}, \binits{R.}},
\bauthor{\bsnm{Bao}, \binits{F.}},
\bauthor{\bsnm{Cao}, \binits{Y.}},
\bauthor{\bsnm{Zhang}, \binits{H.}}:
\batitle{A backward {SDE} method for uncertainty quantification in deep
  learning}.
\bjtitle{Discrete and Continuous Dynamical Systems. Series S}
\bvolume{15}(\bissue{10}),
\bfpage{2807}--\blpage{2835}
(\byear{2022})
\end{barticle}
\endbibitem

\bibitem[\protect\citeauthoryear{Archibald
  et~al.}{2024}]{archibald2024numerical}
\begin{barticle}
\bauthor{\bsnm{Archibald}, \binits{R.}},
\bauthor{\bsnm{Bao}, \binits{F.}},
\bauthor{\bsnm{Cao}, \binits{Y.}},
\bauthor{\bsnm{Sun}, \binits{H.}}:
\batitle{Numerical analysis for convergence of a sample-wise backpropagation
  method for training stochastic neural networks}.
\bjtitle{SIAM Journal on Numerical Analysis}
\bvolume{62}(\bissue{2}),
\bfpage{593}--\blpage{621}
(\byear{2024})
\end{barticle}
\endbibitem

\bibitem[\protect\citeauthoryear{Ma and Yong}{1999}]{ma1999forward}
\begin{bchapter}
\bauthor{\bsnm{Ma}, \binits{J.}},
\bauthor{\bsnm{Yong}, \binits{J.}}:
\bctitle{Forward-backward stochastic differential equations and their
  applications-introduction}.
In: \bbtitle{Forward-backward Stochastic Differential Equations and Their
  Applications},
pp. \bfpage{1}--\blpage{24}.
\bpublisher{Springer},
\blocation{Berlin Heidelberg}
(\byear{1999})
\end{bchapter}
\endbibitem

\bibitem[\protect\citeauthoryear{Zhang}{2017}]{zhang2017backward}
\begin{bchapter}
\bauthor{\bsnm{Zhang}, \binits{J.}}:
\bctitle{Backward stochastic differential equations}.
In: \bbtitle{Backward Stochastic Differential Equations: From Linear to Fully
  Nonlinear Theory},
pp. \bfpage{79}--\blpage{99}.
\bpublisher{Springer},
\blocation{New York}
(\byear{2017})
\end{bchapter}
\endbibitem

\bibitem[\protect\citeauthoryear{Cvitanic and
  Zhang}{2005}]{cvitanic2005steepest}
\begin{botherref}
\oauthor{\bsnm{Cvitanic}, \binits{J.}},
\oauthor{\bsnm{Zhang}, \binits{J.}}:
The steepest descent method for forward-backward sdes
(2005)
\end{botherref}
\endbibitem

\bibitem[\protect\citeauthoryear{Delarue and
  Menozzi}{2006}]{delarue2006forward}
\begin{botherref}
\oauthor{\bsnm{Delarue}, \binits{F.}},
\oauthor{\bsnm{Menozzi}, \binits{S.}}:
A forward--backward stochastic algorithm for quasi-linear pdes
(2006)
\end{botherref}
\endbibitem

\bibitem[\protect\citeauthoryear{Douglas~Jr
  et~al.}{1996}]{douglas1996numerical}
\begin{barticle}
\bauthor{\bsnm{Douglas~Jr}, \binits{J.}},
\bauthor{\bsnm{Ma}, \binits{J.}},
\bauthor{\bsnm{Protter}, \binits{P.}}:
\batitle{Numerical methods for forward-backward stochastic differential
  equations}.
\bjtitle{The Annals of Applied Probability}
\bvolume{6}(\bissue{3}),
\bfpage{940}--\blpage{968}
(\byear{1996})
\end{barticle}
\endbibitem

\bibitem[\protect\citeauthoryear{Ma et~al.}{2008}]{ma2008numerical}
\begin{barticle}
\bauthor{\bsnm{Ma}, \binits{J.}},
\bauthor{\bsnm{Shen}, \binits{J.}},
\bauthor{\bsnm{Zhao}, \binits{Y.}}:
\batitle{On numerical approximations of forward-backward stochastic
  differential equations}.
\bjtitle{SIAM Journal on Numerical Analysis}
\bvolume{46}(\bissue{5}),
\bfpage{2636}--\blpage{2661}
(\byear{2008})
\end{barticle}
\endbibitem

\bibitem[\protect\citeauthoryear{Zhao et~al.}{2014a}]{zhao2014new}
\begin{barticle}
\bauthor{\bsnm{Zhao}, \binits{W.}},
\bauthor{\bsnm{Fu}, \binits{Y.}},
\bauthor{\bsnm{Zhou}, \binits{T.}}:
\batitle{New kinds of high-order multistep schemes for coupled forward backward
  stochastic differential equations}.
\bjtitle{SIAM Journal on Scientific Computing}
\bvolume{36}(\bissue{4}),
\bfpage{1731}--\blpage{1751}
(\byear{2014})
\end{barticle}
\endbibitem

\bibitem[\protect\citeauthoryear{Zhao et~al.}{2014b}]{zhao2014second}
\begin{barticle}
\bauthor{\bsnm{Zhao}, \binits{W.}},
\bauthor{\bsnm{Li}, \binits{Y.}},
\bauthor{\bsnm{Fu}, \binits{Y.}}:
\batitle{Second-order schemes for solving decoupled forward backward stochastic
  differential equations}.
\bjtitle{Science China Mathematics}
\bvolume{57},
\bfpage{665}--\blpage{686}
(\byear{2014})
\end{barticle}
\endbibitem

\bibitem[\protect\citeauthoryear{Zhao et~al.}{2014c}]{zhao2014numerical}
\begin{barticle}
\bauthor{\bsnm{Zhao}, \binits{W.}},
\bauthor{\bsnm{Zhang}, \binits{W.}},
\bauthor{\bsnm{Ju}, \binits{L.}}:
\batitle{A numerical method and its error estimates for the decoupled
  forward-backward stochastic differential equations}.
\bjtitle{Communications in Computational Physics}
\bvolume{15}(\bissue{3}),
\bfpage{618}--\blpage{646}
(\byear{2014})
\end{barticle}
\endbibitem

\bibitem[\protect\citeauthoryear{Archibald
  et~al.}{2020}]{archibald2020efficient}
\begin{barticle}
\bauthor{\bsnm{Archibald}, \binits{R.}},
\bauthor{\bsnm{Bao}, \binits{F.}},
\bauthor{\bsnm{Yong}, \binits{J.}},
\bauthor{\bsnm{Zhou}, \binits{T.}}:
\batitle{An efficient numerical algorithm for solving data driven feedback
  control problems}.
\bjtitle{Journal of Scientific Computing}
\bvolume{85}(\bissue{2}),
\bfpage{51}
(\byear{2020})
\end{barticle}
\endbibitem

\bibitem[\protect\citeauthoryear{Archibald
  et~al.}{2023}]{archibald2023stochastic}
\begin{barticle}
\bauthor{\bsnm{Archibald}, \binits{R.}},
\bauthor{\bsnm{Bao}, \binits{F.}},
\bauthor{\bsnm{Yong}, \binits{J.}}:
\batitle{A stochastic maximum principle approach for reinforcement learning
  with parameterized environment}.
\bjtitle{Journal of Computational Physics}
\bvolume{488},
\bfpage{112238}
(\byear{2023})
\end{barticle}
\endbibitem

\bibitem[\protect\citeauthoryear{Haber and Ruthotto}{2017}]{haber2017stable}
\begin{barticle}
\bauthor{\bsnm{Haber}, \binits{E.}},
\bauthor{\bsnm{Ruthotto}, \binits{L.}}:
\batitle{Stable architectures for deep neural networks}.
\bjtitle{Inverse problems}
\bvolume{34}(\bissue{1}),
\bfpage{014004}
(\byear{2017})
\end{barticle}
\endbibitem

\bibitem[\protect\citeauthoryear{Yong and Zhou}{2012}]{yong2012stochastic}
\begin{bbook}
\bauthor{\bsnm{Yong}, \binits{J.}},
\bauthor{\bsnm{Zhou}, \binits{X.}}:
\bbtitle{Stochastic Controls: Hamiltonian Systems and HJB Equations}
vol. \bseriesno{43}.
\bpublisher{Springer},
\blocation{New York}
(\byear{2012})
\end{bbook}
\endbibitem

\bibitem[\protect\citeauthoryear{Peng}{1991}]{peng1991probabilistic}
\begin{barticle}
\bauthor{\bsnm{Peng}, \binits{S.}}:
\batitle{Probabilistic interpretation for systems of quasilinear parabolic
  partial differential equations}.
\bjtitle{Stochastics and stochastics reports (Print)}
\bvolume{37}(\bissue{1-2}),
\bfpage{61}--\blpage{74}
(\byear{1991})
\end{barticle}
\endbibitem

\bibitem[\protect\citeauthoryear{Du et~al.}{2013}]{IJNAM-10-4}
\begin{barticle}
\bauthor{\bsnm{Du}, \binits{n.}},
\bauthor{\bsnm{Shi}, \binits{J.}},
\bauthor{\bsnm{Liu}, \binits{W.}}:
\batitle{An effective gradient projection method for stochastic optimal
  control}.
\bjtitle{International Journal of Numerical Analysis and Modeling}
\bvolume{10}(\bissue{4}),
\bfpage{757}--\blpage{774}
(\byear{2013})
\end{barticle}
\endbibitem

\end{thebibliography}

\end{document}